\newtheorem{thm}{Theorem}[section]
\newtheorem{lemma}[thm]{Lemma}
\newtheorem{defn}[thm]{Definition}
\newtheorem{cor}[thm]{Corollary}
\newtheorem{prop}[thm]{Proposition}
\theoremstyle{definition}
\newtheorem{rem}[thm]{Remark}
\newtheorem{remark}[thm]{Remark}
\theoremstyle{remark}
\DeclareMathOperator{\card}{Card}
\DeclareMathOperator{\dlen}{DLen}
\DeclareMathOperator{\Fix}{Fix}
\DeclareMathOperator{\Homeo}{Homeo}
\newcommand{\B}{{\cal B}}
\newcommand{\R}{\mathbb R}
\newcommand{\E}{{\cal E}}
\newcommand{\G}{{\cal G}}
\newcommand{\K}{{\cal K}}
\newcommand{\T}{\mathbb T}
\newcommand{\Z}{\mathbb Z}
\def\A{{\mathbb A}}
\def\C{{\cal C}}
\def\Q{{\mathbb Q}}
\def\U{{\cal U}}
\def\ti{\tilde}
\def\sl3z{SL(3, \mathbb Z)}
\def\G{{\cal G}}
\def\H{{\cal H}}
\def\M{{\cal M}}
\def\A{{\mathbb A}}
\def\Q{{\mathbb Q}}
\def\cS{{\cal S}}
\newcommand\jmf[1]{\textcolor{black}{#1}}  
\newcommand\jmfc[1]{\textcolor{black}{#1}}
\title{Zero entropy subgroups of mapping class groups}
\author{John Franks and Kamlesh Parwani \thanks{This work was partially supported by a grant from the Simons Foundation (\#280151 to Kamlesh Parwani).}}
\begin{document}
\maketitle
\begin{abstract}
Given a group action on a surface with a finite invariant set we
investigate how the algebraic properties of the induced group of
permutations of that set affects the dynamical
properties of the group.  Our main result shows that in many
circumstances if the induced permutation group is not solvable then
among the homeomorphisms in the group there must be one with a
pseudo-Anosov component. We formulate this in terms of the mapping
class group relative to the finite set and show the stronger result
that in many circumstances (e.g. if the surface has boundary) if this
mapping class group has no elements with pseudo-Anosov components
then it is itself solvable.
\end{abstract}

\section{Introduction} 

Let $M$ be a compact surface with boundary.  We are interested in the
question of how a group action on $M$ permutes a finite invariant set $X
\subset int(M)$.  More precisely, how the algebraic properties of the
induced group of permutations of a finite invariant set affects the
dynamical properties of the group.  Our main result shows that in many
circumstances if the induced permutation group is not solvable then
among the homeomorphisms in the group there must be one with a
pseudo-Anosov component. We formulate this in terms of the mapping
class group relative to the finite set and show the stronger result
that in many circumstances (e.g. if $\partial M \ne \emptyset$) this
mapping class group is itself solvable if it has no elements with
pseudo-Anosov components.

\begin{defn}  By $\B(M, X)$ we denote the mapping
class group of $M \setminus X$, where $X$ is a finite,
non-empty set.  In the notation of \cite{FM}, if
$S = M \setminus X$ then $\B(M, X) = Mod(S, \partial S)$. 
\end{defn} 

So an element of $\B(M, X)$ is an isotopy class of homeomorphisms of
$M$ which fix $\partial M$ pointwise and permute the elements of
$X$. The isotopies as well as the homeomorphisms are required to fix
$\partial M$ pointwise.

We will say that an element $g$ of $\B(M, X)$ has {\em zero entropy} if it
has a representative with zero entropy. An equivalent and very useful way to 
say this is that $g$ has a Thurston canonical form
with no irreducible components of pseudo-Anosov type
(see Theorem \ref{Thurston} below).
We will say that \jmfc{a subgroup} $G$ of  $\B(M, X)$ has {\em zero entropy} if 
each element has entropy zero. For any subgroup $G$ of  $\B(M, X)$ there is
a natural homomorphism $\pi: G \to \cS_G(X)$ where $\cS_G(X)$ is the group 
of permutations of $X$ induced by $G$. We will denote the derived length of a solvable
group $G$ by $\dlen(G)$.  

We first consider the case of surfaces with genus zero.

\begin{thm}\label{solvable-0}
  Suppose $M$ is an oriented compact connected surface of genus $0$
  and let $G$ be a finitely generated infinite subgroup of $\B(M, X)$
  with zero entropy, where $X$ is a non-empty finite
  set. \jmfc{If} $\partial M \ne \emptyset,$ or there is a point $x \in X$
  fixed by all elements of $G$, then $G$ is solvable. If $M = S^2$
  then there is a finite index normal subgroup $G_0$ of $G$ which is
  solvable and such that $G/G_0$ acts effectively on $S^2.$ Moreover
  when $G$ is solvable the kernel $K$ of the homomorphism
  $\pi: G \to \cS_{G}(X)$ is free abelian.
\end{thm}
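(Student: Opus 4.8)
The plan is to induct on the complexity of $(M,X)$, say on $|X|$ together with the number of boundary components, using the Thurston canonical form (Theorem \ref{Thurston}): zero entropy means no element of $G$ has a pseudo-Anosov component, and the same holds for the restriction of any element to an invariant subsurface. The engine of the induction is the dichotomy that an infinite subgroup of a mapping class group with no pseudo-Anosov element must be reducible --- an irreducible subgroup of $\B(M,X)$ is either finite or contains a pseudo-Anosov element (the relative forms of the theorems of Ivanov and McCarthy). Hence $G$, being infinite and pseudo-Anosov free, admits a non-empty $G$-invariant essential multicurve $\Gamma$ (for definiteness, its canonical reduction system), invariant as a set of isotopy classes under all of $G$; the point worth stressing is that this is genuinely $G$-invariant, not merely invariant under a finite-index subgroup.

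The inductive step is to cut $M$ equivariantly along $\Gamma$. In the presence of an anchor (either $\partial M \neq \emptyset$ or a common fixed point $x \in X$) there is a well-defined notion of an outermost component, namely one separating the anchor from a block of marked points; collapsing to a single marked point each region cut off by an outermost component produces a strictly simpler surface $(M',X')$ of the same genus $0$, still carrying the anchor, together with a $G$-equivariant collapsing homomorphism $q \colon G \to \B(M',X')$. Its image $q(G)$ is finitely generated and of zero entropy, so by induction it is solvable with solvable permutation image. The kernel of $q$ is supported inside the collapsed regions and is a product, over those regions, of the induced zero-entropy groups on the enclosed marked points, each a subgroup of a relative mapping class group of smaller complexity with boundary supplied by the enclosing curve, hence solvable by induction (the free abelian group of Dehn twists about $\Gamma$ sits inside this kernel). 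Thus $G$ is an extension of a solvable group by a solvable group, so $G$ is solvable, and the same extension exhibits $\cS_G(X)$ as solvable.

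The recursion terminates at irreducible subgroups, which by the dichotomy together with zero entropy are finite, hence periodic; this is the only place the conclusion can branch. When $M$ is anchored, every surface produced in the recursion inherits the anchor, and a finite group of orientation-preserving mapping classes of a genus $0$ surface fixing a boundary component or an interior point is realized (Nielsen--Kerckhoff) by rotations and is therefore cyclic, so each irreducible stratum contributes only a cyclic factor; consequently $\cS_G(X)$ embeds in an iterated wreath product of cyclic groups and $G$ is solvable outright. When $M = S^2$ there is no anchor: the recursion may bottom out at a finite group realized by the full symmetry of a configuration on the sphere --- for example the icosahedral group --- which can fail to be solvable. In that case I would let $G_0$ be the finite-index normal subgroup acting trivially on the finite set of components of $\Gamma$; cutting along one such component anchors $G_0$, so the case above applies and $G_0$ is solvable, while $G/G_0$ is a finite group of homeomorphisms of $S^2$ realizing the asserted effective action.

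The main obstacle, and the genuine content of the theorem, is the control of these irreducible periodic strata: one must show that a non-solvable permutation of the pieces cannot occur with zero entropy, equivalently that a non-solvable composition factor in the symmetry of the reduction would force curves whose $G$-orbit fills a subsurface and hence produces a pseudo-Anosov component, contradicting the hypothesis. Planarity (genus $0$, so that anchored periodic maps are rotations) and the anchoring hypothesis are exactly what rule this out, which is why the fully solvable conclusion holds in those cases while only a virtually solvable conclusion survives on $S^2$. Finally, for the concluding clause, once $G$ is known to be solvable the kernel $K = \ker \pi$ consists of pure zero-entropy mapping classes supported on a common reduction system, acting on each complementary piece by commuting products of Dehn twists, so $K$ is free abelian.
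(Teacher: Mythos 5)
Your overall architecture --- equivariant reduction along an invariant multicurve, induction on complexity, and the observation that anchored finite genus-$0$ strata are rotation groups, hence cyclic --- is the same skeleton as the paper's proof, though the mechanics differ: the paper first replaces $G$ by an honest group $\G$ of homeomorphisms that are generalized Dehn twists on a canonical family of annuli and a finite isometry group off them (Lemma~\ref{canonical}, via Kerckhoff realization), then finds an invariant complementary piece $E_0$ by a fixed-point argument for actions on the dual tree (Proposition~\ref{tree action}), and inducts on the number of complementary pieces by coning. Measured against that, your proposal has genuine gaps. The most serious is the $M=S^2$ case. You take $G_0$ to be the finite-index subgroup preserving each curve of $\Gamma$ and claim that ``cutting along one such component anchors $G_0$,'' but this fails twice over: an element preserving a curve may swap its two sides (so $G_0$ need not preserve the cut pieces), and even when it does, the induced mapping classes on the cut piece fix the new boundary only setwise, whereas your anchored case --- like the theorem itself, since $\B(M,X)=Mod(S,\partial S)$ --- requires the boundary fixed pointwise; the paper has to pass to $\G_0^{(1)}=[\G_0,\G_0]$, which kills the boundary rotations, precisely to repair this. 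Likewise your claim that $G/G_0$ ``acts effectively on $S^2$'' is unsupported: $G/G_0$ is a quotient, not a subgroup of any mapping class group, so no realization theorem applies to it directly; the paper obtains the effective action only because $\G_0$ is defined as the stabilizer of the boundary components of the invariant piece $E_0$, the restriction of $\G_0$ to $E_0$ is shown to be trivial (a finite-order homeomorphism of a planar surface preserving each boundary component is the identity), and the finite isometric action of $\G/\G_0$ on $E_0$ then extends over $S^2$. With your definition of $G_0$ none of this is in place.

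A second concrete gap is finite generation: your induction hypothesis applies only to finitely generated groups, yet you apply it to the kernel of the collapsing map $q$ (and to its factors supported in the collapsed regions) without verifying they are finitely generated --- the paper proves Corollary~\ref{finitely generated} (normal subgroups of such $G$ are finitely generated, using the Dehn-twist/finite-group structure from Lemma~\ref{canonical}) exactly to legitimize steps of this kind. Smaller but real issues: when $\partial M$ has several components, a region cut off by an ``outermost'' curve may contain other boundary components of $M$ and cannot be collapsed to a marked point (the paper instead cones every boundary component with a disk carrying two new marked points, which makes $\B(M,X)\to\B(S^2,X')$ injective on $G$ and reduces to the anchored sphere case); your paragraph asserting that a non-solvable permutation of the pieces would ``force curves whose $G$-orbit fills a subsurface'' is a heuristic, not an argument, and is in fact unnecessary in the anchored case given the rotation realization; and your one-line justification that $K$ is free abelian omits the genus-$0$ step the paper uses --- each reducing annulus bounds a disk containing a point of $X$, so an element of $K$, fixing $X$ pointwise, must preserve each annulus and be the identity off their union, whence $K$ is a group of commuting twists.
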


\begin{cor}\label{cor-0}
Suppose $G$ satisfies the hypothesis of Theorem~\ref{solvable-0}.
If $G$ is solvable then its derived length $\dlen(G)$ satisfies 
\[
\dlen(G) -1 \le \dlen(\cS_{G}(X)) \le \dlen(G).
\]
\end{cor}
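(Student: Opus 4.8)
The plan is to read off both inequalities directly from the short exact sequence
\[
1 \longrightarrow K \longrightarrow G \xrightarrow{\ \pi\ } \cS_G(X) \longrightarrow 1,
\]
combined with the last assertion of Theorem~\ref{solvable-0}: when $G$ is solvable the kernel $K = \ker\pi$ is free abelian, and in particular $\dlen(K) \le 1$. Once that structural input is granted, the corollary becomes a purely formal statement about derived lengths of solvable groups, so I expect no genuine obstacle here — the substantive work has already been done in showing $K$ is abelian, and what remains is bookkeeping with the derived series.

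For the upper bound $\dlen(\cS_G(X)) \le \dlen(G)$ I would invoke the standard fact that derived length cannot increase under quotients. Writing $G^{(i)}$ for the terms of the derived series, surjectivity of $\pi$ gives $\pi(G^{(i)}) = \cS_G(X)^{(i)}$ for every $i$, since $\pi$ carries commutators to commutators. Hence if $n = \dlen(G)$, so that $G^{(n)} = 1$, then $\cS_G(X)^{(n)} = \pi(G^{(n)}) = 1$, whence $\dlen(\cS_G(X)) \le n$.

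For the lower bound I would use the complementary principle that an extension with abelian kernel costs at most one extra step. Set $q = \dlen(\cS_G(X))$. From $\pi(G^{(q)}) = \cS_G(X)^{(q)} = 1$ we get $G^{(q)} \subseteq K$, and since $K$ is abelian we have $G^{(q+1)} = [G^{(q)}, G^{(q)}] \subseteq [K,K] = 1$. Therefore $\dlen(G) \le q+1$, which rearranges to $\dlen(G) - 1 \le \dlen(\cS_G(X))$. Assembling the two bounds yields the asserted chain of inequalities.

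As a sanity check on the degenerate cases, note that if $K$ is trivial then $\pi$ is an isomorphism and the two derived lengths agree, consistent with the inequality; the slack of $1$ is exactly the contribution of a nontrivial free abelian $K$ to the derived series of $G$. The only point demanding any care is the elementary identity $\pi(G^{(i)}) = \cS_G(X)^{(i)}$, which follows from surjectivity of $\pi$ together with $\pi([a,b]) = [\pi(a),\pi(b)]$; this is the least interesting part of the argument, and I do not anticipate it causing difficulty.
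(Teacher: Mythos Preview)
Your argument is correct and essentially identical to the paper's: both inequalities are read off from the surjection $\pi: G \to \cS_G(X)$ together with the fact (from Theorem~\ref{solvable-0}) that $K = \ker\pi$ is abelian, giving $\dlen(\cS_G(X)) \le \dlen(G)$ since quotients cannot increase derived length, and $G^{(q)} \subseteq K$ for $q = \dlen(\cS_G(X))$ so that $G^{(q+1)} = 1$. The paper's proof is terser but follows exactly the same two steps.
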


\begin{rem}
Note that as a consequence if  $G$ satisfies the hypothesis of Theorem~\ref{solvable-0},
it is virtually abelian. This follows by applying Theorem~\ref{solvable-0}
to the solvable group $G_0$ and noting
$G_0$ has finite index in $G$ and $G_0/K$ is finite.
This is also a consequence of Theorem B of \cite{BLM} and results presented in \cite{Ivanov}.
\end{rem}

An interesting question asks which permutations of a finite
set force a braid inducing that permutation on its ends to have
a pseudo-Anosov component. 
An interesting corollary of Theorem~\ref{solvable-0} gives a necessary
algebraic condition, not for a single permutation, but for a subgroup of
the permutation group $S_n$ induced by a subgroup of the braid
group on $n$-strands.  

\begin{cor}Suppose $G$ is a finitely generated subgroup of the 
braid group on $n$-strands and $\pi: G \to S_n$ is the natural
projection onto the permutation group of the ends of the strands.
If $G$ has entropy zero then it is solvable and the kernel of  $\pi$ is free abelian
so $G$ is virtually abelian and  has derived length 
satisfying $\dlen(G) -1 \le \dlen(\pi(G)) \le \dlen(G).$
\end{cor}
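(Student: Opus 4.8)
The plan is to realize the braid group as a mapping class group and then invoke the results already established for genus-zero surfaces. Recall the standard identification (see \cite{FM}) of the braid group on $n$-strands with $\B(D^2, X)$, where $D^2$ is the closed disk and $X \subset \Int(D^2)$ is a set of $n$ marked points: a braid corresponds to an isotopy class of homeomorphisms of $D^2$ fixing $\partial D^2$ pointwise and permuting $X$, with isotopies likewise fixing the boundary. Under this identification the natural projection $\pi$ onto $S_n$, which records the permutation of the strand endpoints, agrees with the induced permutation homomorphism $G \to \cS_G(X)$, so that $\cS_G(X) = \pi(G)$. Moreover a braid has entropy zero in the usual Thurston--Nielsen sense exactly when the corresponding mapping class has no pseudo-Anosov component, which is precisely the zero-entropy condition of $\B(D^2, X)$. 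Since $D^2$ is an oriented compact connected surface of genus $0$ with $\partial D^2 \ne \emptyset$, the hypotheses of Theorem~\ref{solvable-0} are met for any finitely generated zero-entropy $G \le \B(D^2, X)$.

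With this setup the argument splits into two cases. If $G$ is finite, then because the braid group is torsion-free the subgroup $G$ must be trivial, and every assertion holds vacuously (the trivial group is abelian and $\ker \pi$ is free abelian of rank zero). If $G$ is infinite, then I would apply Theorem~\ref{solvable-0} directly, using $\partial D^2 \ne \emptyset$, to conclude that $G$ is solvable and that the kernel $K$ of $\pi \colon G \to \cS_G(X) = \pi(G)$ is free abelian.

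To obtain the remaining conclusions, observe that $G/K \cong \pi(G)$ is a subgroup of the finite group $S_n$, hence finite, while $K$ is free abelian; thus $K$ is a finite-index abelian subgroup of $G$ and $G$ is virtually abelian. Finally I would apply Corollary~\ref{cor-0} to $G$, again using $\cS_G(X) = \pi(G)$, which yields directly the stated bounds
\[
\dlen(G) - 1 \le \dlen(\pi(G)) \le \dlen(G).
\]

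I do not expect a substantive obstacle, since the genuine content is carried by Theorem~\ref{solvable-0} and Corollary~\ref{cor-0}; the corollary is an application of those results to the specific surface $M = D^2$. The only points requiring care are the correct identification of the two notions of zero entropy (braid entropy versus absence of pseudo-Anosov components) and the degenerate finite-subgroup case, which is disposed of by torsion-freeness of $B_n$.
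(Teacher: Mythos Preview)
Your proposal is correct and follows exactly the approach the paper intends: the paper states this corollary without a separate proof, treating it as an immediate consequence of Theorem~\ref{solvable-0} and Corollary~\ref{cor-0} applied to $M = D^2$ via the standard identification $B_n \cong \B(D^2, X)$. Your explicit handling of the finite case via torsion-freeness of $B_n$ is a worthwhile addition, since Theorem~\ref{solvable-0} as stated assumes $G$ is infinite.
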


When the genus of the surface $M$ is positive we have the following result.
As in Theorem~\ref{solvable-0} we assume $X$ is a non-empty finite
set.

\begin{thm}\label{solvable-1}
Suppose $M$ is an oriented compact connected surface of genus $g >
1$ and let $G$ be a finitely generated infinite subgroup of $\B(M, X)$ which
contains no element with positive entropy. 
\begin{enumerate}
\item If $\partial M \ne \emptyset$, or there exists $x \in X$ fixed by each
element of $G$, then $G$ is solvable.
\item Otherwise $G$ contains a solvable 
subgroup $G_0$ of index at most $84(g-1)(2g-2)!$ in $G$.
\end{enumerate}
\end{thm}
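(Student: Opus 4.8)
The plan is to organize the argument around the Thurston canonical form (Theorem~\ref{Thurston}) and an induction on the complexity $-\chi(M\setminus X)$, splitting $G$ along a canonical system of reduction curves into pieces on which the group acts by periodic mapping classes, and then reading off solvability from three layers: a free abelian group of Dehn twists at the bottom, a finite ``rotation'' group from the periodic parts, and the permutation of $X$ and of the pieces on top. The dichotomy between (1) and (2) is precisely the question of whether this finite rotation layer can be forced to be solvable or only bounded in order. Concretely, since $G$ has zero entropy no element has a pseudo-Anosov component, so by the theory of canonical reduction systems there is a $G$-invariant system $\Gamma$ of disjoint essential simple closed curves in $M\setminus X$ (canonical, hence invariant as a set of isotopy classes) on whose complementary pieces $M\setminus(X\cup\Gamma)$ every element of $G$ restricts to a periodic homeomorphism. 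Because $G$ is infinite, $\Gamma$ is nonempty: an empty $\Gamma$ would make every element periodic, so $G$ would be a torsion subgroup of a mapping class group and therefore finite (mapping class groups are virtually torsion-free, \cite{FM}), a contradiction. Let $T\le G$ be the subgroup generated by the Dehn twists about the curves of $\Gamma$. Since $\Gamma$ avoids $X$, $T$ fixes $X$ pointwise; since the curves are disjoint, $T$ is free abelian of rank $|\Gamma|$; and since $G$ permutes $\Gamma$, $T$ is normal in $G$. Finally $G/T$ is finite, being built from the finite permutation group of the components of $\Gamma$ and the complementary pieces together with the image of $G$ in $\prod_i \B(S_i,X_i)$, which is a torsion (periodic) subgroup and hence finite by virtual torsion-freeness.

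With this structure in hand, part (2) follows once the finite quotient $Q=G/T$ is bounded: one takes $G_0=T$, which is free abelian and therefore solvable, and its index equals $|Q|$. To bound $|Q|$ I would globalize the per-piece periodic maps, via Nielsen realization (Kerckhoff), to a single finite group of orientation-preserving homeomorphisms of the closed genus-$g$ surface, whose order is at most the Hurwitz bound $84(g-1)$; the permutation of the finitely many complementary regions contributes a further factor of at most $(2g-2)!$. Combining these gives $[G:G_0]=|Q|\le 84(g-1)(2g-2)!$.

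For part (1) the same structure applies, but now the hypothesis must be used to make $Q$ genuinely solvable rather than merely finite. If $\partial M\ne\emptyset$ or some $x\in X$ is fixed by all of $G$, realize the periodic part as isometries of a hyperbolic structure; an orientation-preserving isometry fixing a boundary circle pointwise, or whose derivative at the common fixed point $x$ is trivial, must be the identity, so the rotation group embeds in $SO(2)$ and is cyclic (trivial in the boundary case). Propagating this rotation datum across the adjacency graph of the pieces, which is connected because $M$ is, forces the entire periodic part to be abelian, and a parallel analysis of how the fixed point or boundary constrains the motion of $X$ — the positive-genus analogue of Theorem~\ref{solvable-0}, invoked inductively on the pieces, all of which have boundary and strictly smaller complexity — shows that the permutation group $\cS_G(X)$ and the region permutations are solvable. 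Thus $Q$ is solvable, and as an extension of a free abelian group by a solvable finite group, $G$ is solvable.

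The main obstacle I expect is the gluing and propagation step. Ensuring that the periodic restrictions to the individual pieces assemble into a single finite group acting on $M$ — so that Nielsen realization and the Hurwitz bound are actually available for the $84(g-1)$ factor — and that the fixed-point or boundary rotation data genuinely propagates across the reduction curves to force the periodic layer and the induced permutations to be abelian (respectively solvable) is the delicate heart of the argument; the matching of rotation numbers along each curve of $\Gamma$ is what couples the pieces. Secondary technical points are verifying that conjugation by the rotation layer preserves $T$ with a solvable action on it, and pinning down the precise count of complementary regions that yields the stated $(2g-2)!$.
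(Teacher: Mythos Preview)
Your structural setup is correct and parallels the paper: the canonical reduction system $\Gamma$ is $G$-invariant, the intersection of $G$ with the twist group about $\Gamma$ is free abelian and normal, and the quotient $Q$ is finite. The genuine gap is in part~(2), where you set $G_0=T$ and claim $[G:T]\le 84(g-1)(2g-2)!$. This bound is false. Take $M$ closed of genus $2$, place $n$ points of $X$ in a disk $D\subset M$, let $\gamma=\partial D$, and let $G=\langle\rho,\tau\rangle$ where $\rho$ is an order-$n$ rotation of $D$ permuting $X$ cyclically (extended by the identity) and $\tau$ is the Dehn twist about $\gamma$. Then $G\cong\Z\times\Z/n\Z$ is infinite and zero-entropy, $\Gamma=\{\gamma\}$, $T=\langle\tau\rangle$, and $[G:T]=n$, which exceeds $84\cdot 1\cdot 2!=168$ once $n>168$. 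The flaw in your Hurwitz step is that ``globalizing to the closed genus-$g$ surface'' amounts to applying the forgetful map $\B(M,X)\to\B(M,\emptyset)$; this map has a large kernel (disk rotations and point-pushes), and in the example $\rho$ lies in it. Hurwitz therefore bounds only the \emph{image} of $Q$ in $\MCG(M)$, not $|Q|$ itself, and the periodic layer on the genus-zero pieces can be as large as $|X|$ permits.

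The paper's $G_0$ is not the twist group. Instead (Lemma~\ref{preserve U}) one bounds, by $84(g-1)(2g-2)!$, the index of a subgroup $G_0$ that leaves a single annulus $A\in\U$ invariant; this bound uses Hurwitz only after passing to the complement of the annuli that are essential in $M$ itself, where the forgetting problem does not arise, and the $(2g-2)!$ counts non-annular complementary pieces in $M$, not in $M\setminus X$. That $G_0$ is then shown to be solvable by a separate induction (Lemma~\ref{induction}): once an annulus is $G_0$-invariant, $[G_0,G_0]$ is trivial on it, one cuts along it, and the genus or the number of annuli drops. Your sketch for part~(1) is essentially this induction and is headed in the right direction, though the ``propagation of rotation data'' needs to be made into an honest induction on complexity; note that the resulting $G_0$ is solvable rather than abelian, and that is precisely what allows the index bound to be independent of $|X|$.
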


There is also a result for the case of the torus.
We continue with the assumption that $X$ is a non-empty finite
set.

\begin{thm}\label{solvable-T}
Suppose $M$ is an oriented compact connected surface of genus $g = 1$ 
and let $G$ be a finitely generated subgroup of $\B(M, X)$ which
contains no element with positive entropy. Then $G$ is solvable.
\end{thm}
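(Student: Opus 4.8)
The plan is to reduce to the closed torus with marked points and then induct on $n=|X|$ by forgetting one marked point at a time, using the fact that filling in a finite invariant set cannot increase topological entropy (so every quotient we pass to is again a zero entropy group). First I would dispose of the boundary: capping each component of $\partial M$ with a disk kills the boundary Dehn twists and induces a homomorphism $G \to \MCG(T^2,X)$ whose kernel is the (central, free abelian) group generated by those boundary twists. Since a central free abelian extension of a solvable group is solvable, it suffices to prove the theorem for the closed torus $T^2$ with a nonempty finite marked set $X$; note that no infiniteness hypothesis is needed, and the argument below is uniform over finite and infinite $G$.

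The base case is $n=1$, where $\MCG(T^2,\{x_1\})\cong SL(2,\Z)$ because point–pushing is trivial on the torus (translations are isotopic to the identity). An element of $G$ has zero entropy if and only if its Thurston canonical form has no pseudo-Anosov component, which on the once-marked torus means exactly that the corresponding matrix is not Anosov, i.e. not hyperbolic. So $\bar G\le SL(2,\Z)$ contains only elliptic and parabolic elements. Here I would argue that such a subgroup is solvable: being finitely generated and, if torsion, forced to be finite by linearity (and finite subgroups of $SL(2,\Z)$ are cyclic), while if it contains a parabolic then all parabolics must share a fixed point on the circle at infinity — otherwise a ping‑pong argument produces a hyperbolic element — and an elliptic element cannot fix that point, so in fact $\bar G$ is conjugate into the (abelian) group of triangular integer matrices. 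In every case $\bar G$ is solvable, indeed abelian-or-finite-cyclic, which also explains why the torus conclusion is genuinely \emph{solvable} rather than merely virtually solvable (as in Theorem~\ref{solvable-1}(2)).

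For the inductive step ($n\ge 2$), I would forget the point $x_n$. The Birman exact sequence gives
\[
1 \longrightarrow \pi_1\bigl(T^2\setminus\{x_1,\dots,x_{n-1}\}\bigr) \longrightarrow \MCG(T^2,X) \longrightarrow \MCG(T^2,\{x_1,\dots,x_{n-1}\}) \longrightarrow 1,
\]
valid since $T^2$ with at least one puncture has negative Euler characteristic. The image of $G$ is a finitely generated zero entropy subgroup, hence solvable by the inductive hypothesis. It remains to show the kernel $K=G\cap\pi_1(T^2\setminus\{x_1,\dots,x_{n-1}\})$ is solvable; as a subgroup of a free group it is free, so it is enough to prove it is cyclic. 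This is where Kra's theorem enters: the point–push along a loop $\gamma$ is pseudo-Anosov precisely when $\gamma$ fills the surface, and more generally has a pseudo-Anosov component whenever $\gamma$ fills a proper essential subsurface. Thus every element of $K$ must be a loop that fills no essential subsurface. If $K$ had rank $\ge 2$, it would fill some essential subsurface $Y$ minimally; minimality forces $K$ not to be carried by any proper subsurface of $Y$, and a non-elementary subgroup of $\pi_1(Y)$ contains filling elements, giving an element of $K$ that fills $Y$ — a positive entropy point–push, contradiction. Hence $K$ is cyclic, and $G$, being an extension of a solvable group by a cyclic group, is solvable.

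The main obstacle is the kernel step: pinning down that a point-pushing subgroup all of whose elements have zero entropy must be cyclic. This rests on combining Kra's filling criterion with the fact that a non-elementary subgroup of a surface group necessarily contains a filling element, and on correctly localizing these statements to a minimal filled subsurface so that "filling a proper subsurface" is recognized as producing a pseudo-Anosov component and hence positive entropy. The only other point requiring care is that the Birman sequence degenerates for the closed torus, which is precisely why $n=1$ is treated separately as the base case via the $SL(2,\Z)$ classification rather than as an instance of the inductive step.
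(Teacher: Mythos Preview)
Your inductive step has a genuine gap: the Birman exact sequence you write down, forgetting the single marked point $x_n$, is only defined on the stabilizer of $x_n$ in $\B(\T^2,X)$, not on all of $\B(\T^2,X)$. Elements of $\B(M,X)$ are allowed to permute the points of $X$, so there is no reason $G$ should fix $x_n$, and for $|X|\ge 2$ it need not fix any point of $X$ (e.g.\ take a Dehn twist together with an involution swapping two marked points). Your argument therefore proves only that the pure subgroup $G_{\mathrm{pure}}=\ker\bigl(G\to\cS_G(X)\bigr)$ is solvable. To conclude that $G$ itself is solvable you would still need the permutation image $\cS_G(X)$ to be solvable, and for $|X|\ge 5$ that is not automatic: it is essentially equivalent to what you are trying to prove, and in the paper it is a \emph{consequence} of the theorem, not an input to it. Forgetting an entire $G$-orbit instead does not help, since the kernel is then a surface braid group rather than a free group, and your Kra/cyclic argument no longer applies. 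The paper sidesteps this completely: it uses the canonical reduction system (Lemma~\ref{canonical}) and the torus-specific fact that all essential reducing annuli are parallel, so $\G$ permutes them cyclically and $[\G,\G]$ fixes each one; then Lemma~\ref{induction} (the invariant-annulus case) applies to $[\G,\G]$.

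There is also a softer issue in your kernel step. The assertion that a non-cyclic subgroup $K\le\pi_1(Y)$ whose minimal carrying subsurface is all of $Y$ must contain an element filling $Y$ is correct, but it is not a one-liner and you have not indicated how you would prove it; Kra's criterion by itself only tells you what happens for a \emph{single} loop. One clean way is in fact to invoke exactly the McCarthy--Papadopoulos alternative quoted in the paper's Lemma~\ref{+entropy}: the image $\mathrm{Push}(K)$ is infinite and, by minimality of $Y$, irreducible on $Y$ with the extra puncture, hence contains a pseudo-Anosov. But once you are using that machinery you are very close to the paper's own argument, and the Birman induction is no longer buying you much.
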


\section{Graph Maps} 

We will have need of an elementary result about groups acting on
a finite tree.

\begin{prop} \label{tree action}
 Let $G$ be a group acting isometrically on
a finite tree.  Then $\Fix(G) \ne \emptyset.$
\end{prop}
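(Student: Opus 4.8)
The plan is to produce a single canonically defined point of the tree and observe that every isometry must fix it. The natural candidate is the \emph{center} (Chebyshev center) of $T$. Regard the finite tree as a compact geodesic metric space, with each edge a unit interval, and for $x \in T$ set $r(x) = \max_{y \in T} d(x,y)$. Since $T$ is compact and $r$ is continuous, $r$ attains its minimum; let $c$ realize it and call $\rho = r(c)$ the circumradius. The key claim, on which everything rests, is that $c$ is \emph{unique}.

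Uniqueness is where the tree structure genuinely enters, and I expect it to be the main obstacle. Suppose $c_1 \ne c_2$ both minimize $r$, put $L = d(c_1,c_2) > 0$, and let $m$ be the midpoint of the geodesic $[c_1,c_2]$. For any $y \in T$ the geodesics $[y,c_1]$ and $[y,c_2]$ meet $[c_1,c_2]$ in a common point $p$, since a tree is uniquely geodesic and $0$-hyperbolic. Writing $t = d(p,c_1)$, so that $d(p,c_2) = L - t$, one has $d(y,c_1) = d(y,p) + t$, $d(y,c_2) = d(y,p) + (L-t)$, and $d(y,m) = d(y,p) + |t - L/2|$. Since $|t - L/2| = \max(t, L-t) - L/2$, this yields $d(y,m) = \max_i d(y,c_i) - L/2 \le \rho - L/2$ for every $y$, whence $r(m) \le \rho - L/2 < \rho$, contradicting minimality of $\rho$. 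Hence $c$ is unique.

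With uniqueness in hand the conclusion is immediate. Each $g \in G$ is an isometry of $T$ onto itself, so $r(gx) = \max_y d(gx,y) = \max_z d(gx,gz) = \max_z d(x,z) = r(x)$; thus $g$ carries a minimizer of $r$ to a minimizer, and by uniqueness $g(c) = c$. As $g \in G$ was arbitrary, $c \in \Fix(G)$, so $\Fix(G) \ne \emptyset$.

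Two remarks on robustness. First, this metric formulation automatically accommodates inversions: if the combinatorial center of $T$ is an edge that some element of $G$ flips, the fixed point $c$ is simply the midpoint of that edge, supplied for free. Second, a purely combinatorial route is available and may be preferred in the graph setting, namely iteratively deleting all degree-one vertices; since an isometry preserves vertex degrees, each stage is $G$-invariant, and the classical theorem on the center of a tree guarantees the process terminates at a single vertex or a single edge, giving the fixed vertex or fixed midpoint directly. In either approach the only substantive step is the uniqueness of the center (equivalently, termination at a vertex or edge); the rest is formal.
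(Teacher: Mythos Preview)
Your proof is correct and takes a genuinely different route from the paper's. You argue via the Chebyshev center: minimize the eccentricity $r(x)=\max_y d(x,y)$ over the compact metric tree, prove uniqueness of the minimizer by the midpoint argument exploiting $0$-hyperbolicity, and conclude that any self-isometry fixes this canonical point. The paper instead gives a purely combinatorial argument: it orients each edge $e$ toward whichever complementary subtree $T^+(e)$ contains more than half the vertices. If some edge has exactly half on each side, that edge is unique (any other edge lies strictly inside one half) and hence $G$-invariant, so its midpoint is fixed; otherwise the orientation has the property that at most one edge exits any vertex, so following oriented paths leads to a unique sink vertex, which is therefore $G$-invariant.

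Your approach is the standard circumcenter argument and generalizes verbatim to bounded subsets of $\mathbb{R}$-trees or CAT$(0)$ spaces; it also handles edge-inversions seamlessly, as you note. The paper's argument is more ad hoc and tied to the finite simplicial setting, but it is entirely elementary---no metric completion or continuity is invoked, only counting vertices and the absence of cycles. Interestingly, the paper's ``sink'' is in fact the same point as your center (or one of the two endpoints of the central edge), since the orientation always points toward the side with more vertices, which is the side containing the center; so the two constructions locate the same canonical point by different means. Your alternative leaf-stripping sketch is a third standard route and is closest in spirit to the paper's combinatorial flavor.
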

\begin{proof}
Let $T$ be a finite tree. If $e$ is an edge of $T$, which 
has been assigned an orientation, we will denote by $T^+(e)$
and $T^-(e)$ the two subtrees whose union is the complement
of the interior of $e$. The subtree $T^+(e)$ is chosen to
contain the positive endpoint of $e$ and $T^-(e)$ the
negative.

We wish to orient the edges of $T$ so that for every edge
$e$ the subtree $T^+(e)$ contains more than half of the
vertices of $T$.  This is possible unless there is an
edge $e_0$ such that $T^+(e_0)$ and $T^-(e_0)$ each contain
exactly half the vertices of $T$.  Such an edge is unique,
if it exists, because every other edge lies in (and separates) either 
$T^+(e_0)$ or $T^-(e_0)$.  

Since the edge $e_0$ is unique, if it exists, it must be preserved by
each element of $G$ and hence the midpoint of $e_0$ is an element of
$\Fix(G).$

We are left with the case that every edge of $T$ can be
oriented in such a way that 
the subtree $T^+(e)$ contains more than half of the vertices
of $T$.  Note that this implies that at most one edge can
exit from any vertex.  If one starts at any point on the tree
and follows a path consistent with the orientation this path
will necessarily end in a sink vertex $v_0$ (i.e. one with
no exiting edges) since no cycles are
possible.

We claim that this sink is unique.  This is because its
``basin of attraction,'' i.e. the union of all edges on paths
which lead to $v_0$, is a subtree of $T$.  If there were more
than one such subtree two would intersect and their intersection
would contain a vertex with more than one exiting edge. 

Since $v_0$ is unique it must be a fixed point for every 
element of $G$.
\end{proof}

\section{Dehn Twists}

We denote $\R/\Z$ by $\T^1.$

\begin{defn} A {\em generalized Dehn twist} between two copies
of  $\A = \T^1 \times I$, say $\A_0$ and $\A_1$
is a homeomorphism $\alpha : \A_0 \to \A_1$ 
which for some $a,b \in \Q$ is given by 
$\alpha(x, t) = ( x + at +b \ (mod\ 1),\ t)$.  A generalized
Dehn twist is called {\em essential} if $a \ne 0.$ 
\end{defn}

We will frequently be interested in the case that $\A_0 = \A_1$
Observe that in this case the generalized Dehn twists on $\A = \A_0 =
\A_1$ form an abelian group. Note that since $a,b \in \Q$, \ 
 $\alpha: A \to \A$ has finite order on the boundary of $S^1 \times I.$

\begin{lemma}\label{Dehn twist}
Suppose $f: \A_0 \to \A_1$  is a homeomorphism which is 
a \jmfc{rational rotation} on each component of $\partial \A_0$.  Then there is a 
unique generalized Dehn twist $\alpha(f)$ on $\A_0$ which is isotopic to $f$
relative to $\partial \A_0.$  If $g$ is another such homeomorphism then
$\alpha(f \circ g) = \alpha(f) \circ \alpha(g) = 
\alpha(g) \circ \alpha(f) = \alpha(g \circ f).$
\end{lemma}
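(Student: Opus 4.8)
The plan is to reduce everything to the standard model $\A = \T^1 \times I$, using the canonical identifications of $\A_0$ and $\A_1$ with $\A$, so that $f$ and all candidate generalized Dehn twists are viewed as maps $\A \to \A$. Since $f$ restricts to a rational rotation on each boundary circle, it carries each component of $\partial \A$ to the corresponding component, and I record its boundary behavior by two rationals: $f(x,0) = (x + b_0, 0)$ and $f(x,1) = (x + b_1, 1)$ with $b_0, b_1 \in \Q$ (fixed representatives mod $1$). A generalized Dehn twist $\alpha(x,t) = (x + at + b, t)$ restricts on the boundary to rotation by $b$ at $t=0$ and by $a + b$ at $t = 1$, so the generalized Dehn twists agreeing with $f$ on $\partial \A$ are exactly those with $b = b_0$ and $a = b_1 - b_0 + k$, $k \in \Z$; call them $\alpha_k$. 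A direct computation gives $\alpha_k = T^k \circ \alpha_0$, where $T(x,t) = (x + t, t)$ is the standard positive Dehn twist, so the $\alpha_k$ differ from one another precisely by powers of $T$.

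For existence and uniqueness I would invoke the standard fact that $\MCG(\A, \partial \A) \cong \Z$, generated by $T$ (see \cite{FM}); here the condition \emph{rel} $\partial \A$ is essential, since fixing the boundary pointwise throughout the isotopy is exactly what makes $T$ have infinite order. The set of isotopy classes rel $\partial \A$ of homeomorphisms $\A \to \A$ restricting to the fixed boundary map $f|_{\partial \A}$ is a torsor over $\MCG(\A, \partial \A) \cong \Z$: fixing $\alpha_0$ as reference, any such $h$ sends to the class of $h \circ \alpha_0^{-1}$ (which fixes $\partial \A$ pointwise), and this is a bijection onto $\Z$. Since the $\alpha_k$ realize every power of $T$, exactly one $\alpha_k$ lies in the class of $f$; this is $\alpha(f)$. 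Uniqueness follows because two generalized Dehn twists isotopic rel $\partial \A$ must first agree on $\partial \A$ (forcing the same $b$ and the same $a \bmod 1$), hence differ by some $T^k$, whereupon $T^k \simeq \mathrm{id}$ rel $\partial \A$ forces $k = 0$.

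For the composition identities I take $\A_0 = \A_1 = \A$ so that $f,g$ are self-maps. The key step is that isotopy rel $\partial \A$ is preserved by composition: if $F_s$ is an isotopy rel $\partial \A$ from $f$ to $f'$ and $G_s$ one from $g$ to $g'$, then $H_s := F_s \circ G_s$ is an isotopy from $f \circ g$ to $f' \circ g'$, and it is rel $\partial \A$. Indeed, for $p \in \partial \A$ one has $G_s(p) = g(p) \in \partial \A$ independent of $s$, and since $F_s$ agrees with $f$ on $\partial \A$ for all $s$, $H_s(p) = F_s(g(p)) = f(g(p))$ is independent of $s$. Applying this with $f' = \alpha(f)$, $g' = \alpha(g)$ gives $f \circ g \simeq \alpha(f) \circ \alpha(g)$ rel $\partial \A$. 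Now $\alpha(f) \circ \alpha(g)$ is itself a generalized Dehn twist, since the generalized Dehn twists on $\A$ form an abelian group under composition (composition adds the parameters $(a,b)$, as noted after the definition). By the uniqueness just proved, $\alpha(f \circ g) = \alpha(f) \circ \alpha(g)$; the same argument gives $\alpha(g \circ f) = \alpha(g) \circ \alpha(f)$, and abelianness gives $\alpha(f) \circ \alpha(g) = \alpha(g) \circ \alpha(f)$, so all four expressions coincide.

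The only genuinely nontrivial input is the structure of $\MCG(\A, \partial \A)$: without the infinite order of the Dehn twist (equivalently, without fixing the boundary pointwise) uniqueness would collapse. Everything else is bookkeeping: matching boundary rotation numbers to pin down $(a,b) \bmod 1$, and the elementary observation that composing boundary-fixing isotopies stays boundary-fixing. I expect the point requiring most care to be the torsor/uniqueness argument, specifically keeping straight the distinction between ``agrees with $f$ on $\partial \A$'' (which fixes $b$ and $a \bmod 1$) and ``isotopic to $f$ rel $\partial \A$'' (which additionally fixes the integer $k$, i.e.\ the full twisting number).
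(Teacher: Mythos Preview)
Your argument is correct, but it proceeds differently from the paper's. The paper works in the universal cover $\tilde{\A} = \R \times I$: it fixes a lift $\tilde f$ of $f$ by choosing a real preimage $b_0$ of the bottom rotation, reads off the real number $b_1$ as the translation amount of $\tilde f$ on the top, and sets $\tilde\alpha(x,t) = (x + (b_1 - b_0)t + b_0,\, t)$, which descends to $\alpha(f)$. The point is that passing to a specific lift of $f$ resolves the $\Z$-ambiguity in $a$ automatically (the integer $k$ in your $\alpha_k$ is baked into the choice of lift), and a straight-line homotopy in the cover gives the isotopy rel $\partial\A$; the composition law then follows because lifts compose and the parameters $(a,b)$ add. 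By contrast, you stay downstairs, enumerate the candidate twists $\alpha_k = T^k\circ\alpha_0$, and invoke $\MCG(\A,\partial\A)\cong\Z$ to single out the correct $k$ via a torsor argument; your composition law comes from uniqueness together with the observation that isotopies rel boundary compose. Your route makes uniqueness and the role of the boundary condition more explicit, and it cleanly separates the two ingredients (boundary data fixes $(a,b)\bmod 1$; the twist number fixes $k$). The paper's route is more constructive and avoids citing the computation of $\MCG(\A,\partial\A)$, at the cost of leaving the isotopy and uniqueness somewhat implicit.
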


\begin{proof}
Let $\ti \A = \R \times I$ be the universal cover of $\A.$
Choose $b_0 \in \R$ which projects to the rotation $f|_{\T \times \{0\}}$ on
$\T$.  Let $\ti F: \ti \A \to \ti \A$ be the lift for which
$\ti f|_{\R \times \{0\}}$ is translation by $b_0.$ Then 
$\ti f|_{\R \times \{1\}}$ is translation by some $b_1 \in \R.$ 
We define $\ti \alpha: \ti \A \to \ti \A$  by
$\ti \alpha( x , t) = x + at + b_0$ where $a = b_1 - b_0$. 
The homeomorphism $\ti \alpha$ is equivariant with respect to the 
$\Z$ action on $\ti \A$ so $\ti \alpha$ is the lift of a generalized
Dehn twist $\alpha(f).$  If $g$ is another homeomorphism with the
properties of $f$ then from the construction it is clear that
$\alpha(f \circ g) = \alpha(f) \circ \alpha(g).$
Sinc the group of generalized Dehn twists is abelian
it follows that $\alpha(f \circ g) =  \alpha(g \circ f).$
\end{proof}

\section{Thurston's Theorem}

We cite a special case of Thurston's theorem on classification of
surface homeomorphisms up to isotopy. We limit ourselves to those 
homeomorphisms with zero entropy or equivalently have no pseudo-Anosov
components as this is appropriate for our needs.  \jmf{Throughout this
section $\Homeo_0(M)$ denotes the orientation preserving homeomorphisms
of the compact surface $M$.}

\begin{thm}[Thurston]\label{Thurston}
Suppose $M$ is a compact oriented surface and $X \subset M$ is finite.
Let  $[g]$ be an element of $\B(M, X)$ which has a representative with
entropy zero. There exists a \jmf{homeomorphism} $f$ representing $[g]$
and a finite set of pairwise disjoint closed annuli
$\U = \{ A_j\}$  in $M \setminus X$ with the following properties:
\begin{enumerate}
\item For each component $S$ of $\partial M$ there is an annulus $A \in \U$ 
such that $S$ is one of the boundary components of $A$.
\item The diffeomorphism $f$ permutes the elements of $\U = \{A_i\}$
\item The restriction of $f$ to each $A_i$ is a
generalized Dehn twist from $A_i$ to its image.  This twist is essential
if $A_i$ does not contain a component of $\partial M.$
 
\item If $Y$ is a component of the complement of $U = \bigcup_i A_i$ which is
invariant under $f^k$ then $f^k|_{Y}$ has finite order.
\end{enumerate}
\end{thm}

\begin{remark}
The isotopy class $[c_i]$ of an essential simple closed curve $c_i$ in $A_i$
which is not isotopic to a component of $\partial M$ 
is called an {\em essential reducing curve}. The curves $c_i$ and 
the annuli $A_i$ are unique up to isotopy (see \cite{BLM}).
Each of the neighborhoods $A_i$ may be chosen to lie in an arbitrarily
small neighborhood of a component of $\partial M$ or of an essential
reducing curve $c_i.$
\end{remark}

We will say that two embedded annuli $A_1$ and $A_2$ {\em intersect essentially}
in $M \setminus X$ if any embedded simple closed curves $c_1$ and $c_2$,
which are isotopic to core curves of $A_1$ and $A_2$ respectively, must
have a point of intersection. We will say they {\em intersect minimally} if
there intersection has the fewest number of components among all choices
of isotopy class representatives for $A_1$ and $A_2$.

\begin{lemma}\label{+entropy}
\jmf{Let $M$ be a compact oriented surface and let $X\subset M$ 
be a finite subset.  Suppose $[g_1], [g_2] 
\in \B(M,X)$ each satisfy the hypothesis of  Theorem~\ref{Thurston}, i.e.,
they contain zero entropy representatives. For $i = 1,2$
let $f_i$ denote the homeomorphism guaranteed by this Theorem
and \jmfc{let} $\U_i$ denote the associated family of annuli. Then if $U_i(g_1) \in
\U(g_1)$ and $U_j(g_2) \in \U(g_2)$ intersect essentially, the group
$\G$ generated by $[f_1] = [g_1]$ and  $[f_2] = [g_2]$ 
in  $\B(M,X)$ has elements each of whose representatives has positive entropy.}
\end{lemma}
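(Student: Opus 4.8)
The plan is to show that two annuli whose cores intersect essentially give rise, through high powers of the associated generalized Dehn twists, to a homeomorphism whose action on isotopy classes of curves grows exponentially, which forces positive entropy. This is essentially the classical fact that two Dehn twists along curves with nonzero geometric intersection number generate a free group containing pseudo-Anosov elements (Thurston's construction), but I want to extract it from the structure provided by Theorem~\ref{Thurston}.

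Let me set up the argument. Write $c_1$ for a core curve of $U_i(g_1)$ and $c_2$ for a core curve of $U_j(g_2)$, and let $n = i(c_1,c_2) > 0$ be their geometric intersection number, which is positive by the essential-intersection hypothesis. By Theorem~\ref{Thurston}(3), the restriction of $f_1$ to $U_i(g_1)$ is an essential generalized Dehn twist, so some power $f_1^{N}$ restricts on this annulus to an honest (nontrivial, integer-power) Dehn twist $T_{c_1}^{a}$ with $a \ne 0$, up to isotopy supported in the annulus; similarly $f_2^{M}$ restricts to $T_{c_2}^{b}$ with $b \ne 0$. The first step is to reduce to the pure two-generator situation: I would replace $[g_1],[g_2]$ by suitable powers $[f_1]^N,[f_2]^M$ lying in $\G$, whose mapping-class effect near the relevant annuli is a genuine power of a Dehn twist about $c_1$ (resp. $c_2$), absorbing the finite-order and permutation behavior on the other annuli into the choice of exponents. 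It suffices to exhibit one element of $\G$ with positive entropy, and any such element's isotopy class has positive entropy, which is what the statement asserts.

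The heart of the matter is the growth estimate. I would track the sequence of geometric intersection numbers under alternating application of the two twists, using the standard inequality
\[
i\bigl(T_{c_1}^{a}(\gamma),\, c_2\bigr) \ \ge\ |a|\, i(c_1,\gamma)\, i(c_1,c_2) \ -\ i(\gamma,c_2),
\]
and its symmetric counterpart. Applying $T_{c_1}^{a}$ and $T_{c_2}^{b}$ alternately to a fixed curve $\gamma$, these inequalities show that, once $|a|\,n \ge 2$ and $|b|\,n \ge 2$, the pair of intersection numbers $\bigl(i(\cdot,c_1), i(\cdot,c_2)\bigr)$ is driven by a matrix with entries of absolute value at least $2$ and determinant $1$, hence with an eigenvalue strictly larger than $1$ in modulus. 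Thus $i(h^k(\gamma), c_1)$ grows exponentially in $k$, where $h$ is the composition of the two twist powers. Since the geometric intersection number with a fixed curve is a lower bound (up to a multiplicative constant depending on $\gamma$ and $M$) for the word length of $h^k(\gamma)$ measured in any fixed generating set, or more directly since exponential growth of intersection numbers under iteration is a well-known obstruction to the existence of a finite-order or reducible-with-finite-order-pieces representative, the mapping class $h$ cannot have entropy zero. By Theorem~\ref{Thurston} a zero-entropy class has a representative for which intersection numbers with a fixed curve grow at most linearly under iteration (each piece being either a Dehn twist, which grows intersection numbers linearly, or a finite-order map), so exponential growth forces a pseudo-Anosov component, i.e. positive entropy.

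The main obstacle I anticipate is not the eigenvalue computation but the bookkeeping needed to pass from the global homeomorphisms $f_1,f_2$ — which permute their whole families of annuli and act with finite order on complementary pieces — to clean Dehn-twist powers about the two essentially-intersecting curves, and to make sure the intersection-number growth is genuinely driven by these two curves and not cancelled by the other twisting data. I would handle this by choosing the exponents $N,M$ so that $f_1^N$ and $f_2^M$ fix each annulus in their respective families and act as a pure power of a Dehn twist on each (finite order on the complementary components contributing nothing to the asymptotic intersection growth), and then restricting attention to intersection numbers with $c_1$ and $c_2$, which are insensitive to the twisting along all the other, disjoint, curves in the two families. The essential-intersection hypothesis guarantees $n \ge 1$, and essentiality of the twists guarantees $a,b \ne 0$, so after raising to the chosen powers the relevant $2\times 2$ dynamics is hyperbolic and the exponential growth conclusion follows.
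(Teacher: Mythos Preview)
Your approach is genuinely different from the paper's and, in spirit, perfectly reasonable: the paper does not compute intersection numbers at all, but instead invokes the McCarthy--Papadopoulos classification (Theorem~4.6 of \cite{McP}) for subgroups of $\B(M,X)$, which says that any subgroup is either finite, reducible, or contains a pseudo-Anosov. The paper rules out the finite case (the $[g_i]$ have infinite order) and then, in the reducible case, argues in the universal cover that the common invariant curve system $C$ cannot essentially intersect any annulus of $\U(g_1)$ or $\U(g_2)$; cutting along $C$ and reapplying the trichotomy on the piece containing the essentially intersecting annuli then forces a pseudo-Anosov component. Your route via exponential growth of geometric intersection numbers is the classical Thurston-construction alternative, and it has the advantage of being more self-contained.

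That said, there is a real gap in your bookkeeping step. You assert that intersection numbers with $c_1$ and $c_2$ are ``insensitive to the twisting along all the other, disjoint, curves in the two families,'' but this is not correct. The other core curves in $\U(g_1)$ are disjoint from $c_1$, so indeed $i(f_1^N(\gamma),c_1)=i(\gamma,c_1)$; however, there is no reason those other curves are disjoint from $c_2$, and twisting along them can certainly change $i(\,\cdot\,,c_2)$, possibly in the opposite direction from the $T_{c_1}^a$ contribution. Thus after passing to powers you are not comparing two Dehn twists but two \emph{multitwists}, and the single-twist inequality you quote does not directly apply. The fix is not hard but it is not what you wrote: either invoke Thurston's construction in its multicurve form (two multicurves $C_1,C_2$ with $i(C_1,C_2)>0$ yield, on the subsurface they fill, a pseudo-Anosov from high enough powers of the multitwists), or track growth against the whole multicurves $C_1,C_2$ rather than the single curves $c_1,c_2$, using the multitwist intersection estimates (e.g.\ those in \cite{FM}, Proposition~3.4 and the surrounding discussion) in place of the single-twist inequality. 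Once you do that, your exponential-growth argument goes through; but as written, the claimed insensitivity is false and the $2\times2$ linear-algebra step is unjustified.
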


\begin{proof}
\jmf{Since $[f_i] = [g_i]$ we may without loss of generality choose $g_i= f_i$.}
By taking powers if necessary we may assume $g_i$ preserves
the components of $\U(g_i)$.
Theorem 4.6 of \cite{McP}  (see also \cite{FM2} for the result as stated below)
implies that any subgroup $G$ of $\B(M,X)$ satisfies one of the following:

\begin{enumerate}
\item $G$ contains a pair of independent pseudo-Anosov elements 
(such subgroups are called sufficiently large in \cite{McP}]),
\item $G$ fixes the the laminations $\{F^-(g); F^+(g)\}$ for a certain pseudo-Anosov element g,
\item $G$ is and infinite subgroup fixing (up to isotopy) a finite
  disjoint system of non-peripheral essential simple closed curves on M (such
  subgroups are called reducible),
\item $G$ is finite.
\end{enumerate}

If $G$ is the subgroup generated by $[g_1]$ and $[g_2]$ then it
contains elements of infinite order (e.g. $[g_i]$) so (4) is
impossible. In cases (1) and (2) the group $G$ contains elements which
are isotopic to pseudo-Anosov maps and \jmf{hence  any representative of
such an element has positive entropy.}

We are left with case (3), the reducible case. In this case we claim
that no element of the finite set $C$ of non-peripheral essential
simple closed curves invariant under $G$ can have an essential
intersection with an element of $\U(g_i)$.

To show this claim we choose a hyperbolic metric on $M$ and geodesic
representatives $\{c_j\}$ of the elements of $C$ and also choose
geodesic representatives $\{u_k\}$ of $\U(g_i)$.  We assume that
$c_0$ and $u_0 \in \U(g_1)$ have an essential intersection and show
this leads to a contradiction, the other cases being similar.  Let $h$
be isotopic to a power of $g_1$ which twists more than once around
$u_0$ and is chosen so that $h = id$ outside of the annuli
$\U(g_1)$. Let $\ti u_0$ be a lift of $u_0$ to the universal cover
$\ti M$ and let $\ti U_0(g_1)$ be the lift of $U_0(g_1)$ containing
$\ti u_0$.  Since $u_0$ and $c_0$ intersect essentially there is a
lift $\ti c_0$ which intersects $\ti u_0$.  There are lifts of $h$
which fix the enpoints $p^+, p^-$ of $\ti u_0$ in the compactification
of $\ti M$ obtained by adding the circle at infinity $S_\infty$.
These lifts all differ by compositiion with a power of $T$, the covering
translation corresponding to $c_0$. Because $h$ twists more than one
turn there is a lift $\ti h$ of $h$ which preserves $\ti U_0(g_1)$ and
moves points in the boundary components of of $\ti U_0(g_1)$ in
opposite directions, i.e. for $z$ in one component of $\partial \ti U_0(g_1)$ we
have $\ti h(z) = T^d(z)$ for some $d >0$ while for  $z'$ in the other
component $\ti h(z') = T^{d'}(z)$ for some $d'<0$. 

It follows that $\ti h$ fixes $p^+$ and $p^-$ and
moves the points in the components of
$S_\infty \setminus \{p^+, p^-\}$ in opposite directions. More
precisely, we will show, that for some $x_0$ in one component of
$S_\infty \setminus \{p^+, p^-\}$ we have $\ti h^n(x_0)  = T^{nd}(x_0)$ and
hence for 
any $x$ in the same componenent of  $S_\infty \setminus \{p^+, p^-\}$
will have
\[
\omega(x,\ti h) = \lim_{n \to \infty} \ti h^{n}(x_0) = \lim_{n \to \infty} T^{nd}(x_0) = p^+,
\]
and similarly for $x$ in the other component we have
$ \omega(x,\ti h) = p^-$. To see this consider a curve $\ti \gamma(t)$
in $\ti M$ starting at a point
$\gamma(0) = z \in \partial \ti U_0(g_1)$ and ending at a point of
$S_\infty \setminus \{p^+, p^-\}$ but with
$\gamma(t) \notin \ti U_i(g_1)$ for all $i$ and all $t>0$.  Then the
projection of $\gamma(t)$ to $M$ lies in the fixed point set of $h$,
so the lift $\ti h$ must take $\ti \gamma(t)$ to $T_0(\ti \gamma(t)$
for some covering translation $T_0$ independent of $t$.  Since
$\ti h(z) = T^d(z)$ it must be that $T_0 = T^d$ and
$\ti h^n(x_0) = T^{nd}(x_0)$ where $x_0$ is the end of $\gamma(t)$ in
$S_\infty$.

To produce the contradiction giving us the claim we first note that
since the geodesics $c_0$ and $u_0$ intersect essentially
there is a lift $\ti c_0$  of $c_0$ which intersects  $\ti u_0$.
The ends of $\ti c_0$ are then in
different components of $S_\infty \setminus \{p^+, p^-\}$.  We know
that $h(c_0)$ is isotopic to $c_0$. Hence the lifted geodesic $\ti c'$
whose ends are the ends of $\ti h(\ti c_0)$ is another lift of $c_0$
and there must be a covering translation $T_1$ such that
$T_1(\ti c_0) = \ti c'$. But since $\ti h$ moves ends of $\ti c_0$ in
opposite directions the ends of $\ti c'$ are separated in
$\ti M \cup S_\infty$ by $\ti c_0$.  This would require that $\ti c'$
and $\ti c_0$, two lifts of $c_0$, intersect.  This contradiction
proves the claim that no element of the finite set $C$ of non-peripheral essential
simple closed curves invariant under $G$ can have an essential
intersection with an element of $\U(g_i)$.

We can now complete the proof of the lemma.  Since all elements
of $C$ have representatives disjoint from $\U(g_1)$ and $\U(g_2)$, we may reduce
along the curves of $C$ obtaining an induced subgroup of the mapping
class of their complement.  One component $P$ of this complement
contains elements of both $\U(g_1)$ and $\U(g_2)$ which intersect
essentially. This component must be invariant under $[g'_1]$ and $[g'_2]$
where $[g'_i]$ is the element of the mapping class of $P \setminus X$
induced by $[g_i]$.
Hence we may consider the maps $g'_1$ and $g'_2$ and 
and the subgroup $G_P$ of the mapping class group of $P \setminus X$ which they
generate. We again must have one of the four alternatives listed
above.  Case (4) is not possible since $g_i$ has infinite order.  Case
(3) is not possible since we have reduced as much as possible.  Cases
(1) and (2) imply there is an element $[g] \in G$ such that mapping class
of $P \setminus X$ which it induces 
is pseudo-Anosov. It follows that $[g]$ has a pseudo-Anosov
componenent and in particular any $g$ representing it has positive
entropy.
\end{proof}

\begin{lemma}\label{Kerckhoff} 
Suppose $M$ is a compact oriented surface, $X \subset int(M)$ is a
non-empty finite subset, and $M \setminus X$ has negative Euler
characteristic and is provided with a hyperbolic metric.  Suppose also
that $G$ is a finitely generated subgroup $\B(M, X)$ every element of
which has finite order.  Then there is a finite group $\G$ of
isometries of $M \setminus X$ and an isomorphism $\psi: G \to \G$ such
that for each $[h] \in G$ the homeomorphism $\psi([h])$ is in the
isotopy class $[h].$
\end{lemma}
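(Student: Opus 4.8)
The plan is to reduce the lemma to Kerckhoff's solution of the Nielsen realization problem, the essential preliminary being that the hypotheses force $G$ to be a \emph{finite} group; once that is known, a finite subgroup of the mapping class group of a hyperbolic surface is realized by isometries, and the map from isometries to mapping classes is injective, which gives the isomorphism $\psi$.

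First I would dispose of the boundary case and establish finiteness. If $\partial M \neq \emptyset$, then $\B(M,X)$ is torsion free (a standard fact about the mapping class group of a surface with nonempty boundary, whose homeomorphisms and isotopies fix $\partial M$ pointwise; see \cite{FM}), so the assumption that every element of $G$ has finite order forces $G$ to be trivial and the conclusion holds with $\G$ trivial. Hence I may assume $\partial M = \emptyset$, so $M$ is closed and $M\setminus X$ is a closed surface with the punctures $X$ removed, of negative Euler characteristic. In this case $\B(M,X)$ does have torsion, but it is virtually torsion free: there is a finite index normal subgroup $H$ (for instance the kernel of the action on $H_1(M\setminus X;\Z/3)$) which is torsion free. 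Then $G\cap H$ is torsion free, being a subgroup of $H$, and is a torsion group, being a subgroup of $G$; hence $G\cap H$ is trivial, so the quotient map restricts to an injection $G = G/(G\cap H)\hookrightarrow \B(M,X)/H$ into a finite group, and $G$ is finite. (Finite generation of $G$ is not actually needed here.)

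Next I would invoke Kerckhoff's theorem in the following form: the isometric, properly discontinuous action of $\B(M,X)$ on the Teichm\"uller space $\mathcal{T}$ of $M\setminus X$, a contractible finite dimensional space, has the property that every finite subgroup possesses a global fixed point. Let $[\sigma]\in\mathcal{T}$ be a point fixed by $G$, represented by a complete hyperbolic metric $\sigma$. The stabilizer of $[\sigma]$ in the mapping class group is canonically identified with the group $\operatorname{Isom}(M\setminus X,\sigma)$ of orientation preserving isometries: each mapping class fixing $[\sigma]$ has a representative that is a $\sigma$-isometry, and that representative is unique inside its isotopy class. This produces, for every $[h]\in G$, a well defined isometry $\psi([h])$ lying in the class $[h]$, and $\psi$ is a homomorphism into $\operatorname{Isom}(M\setminus X,\sigma)$.

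Finally, $\psi$ is injective because on a surface of negative Euler characteristic the natural map $\operatorname{Isom}(M\setminus X,\sigma)\to\B(M,X)$ is injective: an isometry isotopic to the identity lifts to an isometry of $\mathbb{H}^2$ commuting with the deck group and isotopic to the identity, hence is a deck transformation isotopic to the identity, hence the identity. Thus $\psi\colon G\to\G:=\psi(G)$ is an isomorphism onto a finite group of isometries with $\psi([h])\in[h]$ for all $[h]$. The one deep input is Kerckhoff's theorem itself; the main obstacle beyond citing it is the bookkeeping around $\partial M$, and this is exactly why the torsion freeness of the boundary case is load bearing: an orientation preserving isometry fixing a geodesic boundary circle pointwise would be forced to be the identity, so without first reducing to $G$ trivial one could not realize a nontrivial class rel $\partial M$ by an isometry.
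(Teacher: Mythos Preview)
Your proof is correct and follows the same overall architecture as the paper's: first establish that $G$ is finite, then invoke Kerckhoff's Nielsen realization theorem to realize $G$ by isometries. The difference lies in how finiteness is obtained. You split off the case $\partial M\neq\emptyset$ using torsion-freeness of $\B(M,X)$, and in the closed case you appeal to the standard fact that the kernel of the action on $H_1(M\setminus X;\Z/3)$ is torsion free, so $G\cap H$ is a torsion-free torsion group, hence trivial. The paper instead argues uniformly: the image of $G$ in $GL(H_1(M\setminus X;\R))$ is a linear torsion group, hence locally finite by Burnside, so $G$ has a finite-index subgroup $G_0$ acting trivially on $H_1$; then for any $[h]\in G_0$ realized as a finite-order isometry, an explicit Lefschetz number computation gives $L(h)=1-\dim H_1<0$, forcing $h=\mathrm{id}$.

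What each approach buys: your route is shorter and cleanly isolates the $\partial M\neq\emptyset$ case (where the paper's Lefschetz count, as written, would need care since such an $h$ fixes $\partial M$ pointwise and thus does not have isolated fixed points). The paper's route is more self-contained, reproving the torsion-freeness of the relevant congruence-type subgroup rather than citing it, and it makes the role of negative Euler characteristic visible in the inequality $L(h)<0$. Both lead to the same invocation of Kerckhoff and the same injectivity argument for $\psi$.
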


\begin{proof}
This is almost a result of Kerckhoff (see \cite{K} or Theorem 7.2 of
\cite{FM}), but we need to know that $G$ is a finite group.  For this
we follow a proof given in Chapter 7 of \cite{FM}.  The action of $G$
on $H_1(M \setminus X, \R)$ is linear and hence its image is finite by
the classical Burnside result.  Hence $G$ has a finite index subgroup
$G_0$ which acts as the identity on $H_1$. If $[h] \in G_0$ then it
has finite order and can be realized by a finite order isometry of $M
\setminus X$ (this follows, e.g., from Kerckhoff's theorem.  See
\cite{K} or Theorem 7.2 of \cite{FM}). We would like to apply the
Lefschetz fixed point theorem which requires a compact space, so we
let $M_0$ be the complement in $M$ of small open $h$-invariant
neighborhoods of points of $X$.  The action of $h$ is trivial on
$H_1(M_0).$ Since $h$ is an isometry of $M_0$, if $h \ne id,$ then its
fixed points are isolated and each fixed point has Lefschetz index
$1$.  It follows that $\card(\Fix(f)) = L(f)$ where $L(f)$ is the
Lefschetz number of $f$.  By the Lefschetz fixed point theorem $L(f) =
tr(f_{*0}) - tr(f_{*1}) = 1 - \dim(H_1(M_0))$ (because $tr(f_{*2}) =
0$).  Since $M_0$ has negative Euler characteristic $\dim(H_1(M_0)) \ge
2$ which implies $L(f) < 0$ so $L(f) \ne \card(\Fix(f)).$ This
contradiction implies $h = id.$ We conclude that $G_0$ is trivial and
$G$ is finite.

Now by Kerckhoff's  theorem
(see \cite{K} or Theorem 7.2 of \cite{FM}) $G$ can be realized
by a group $\G$ of isometries of a hyperbolic metric.
\end{proof}

\begin{lemma}\label{C-properties}
Suppose $M$ is a compact oriented surface and $X \subset M$ is finite.
Let  $G$ be a finitely generated
subgroup of $\B(M, X)$ each element of which has a representative of zero
entropy. There exists a set $\C$ of simple closed curves in $M \setminus X$
such that
\begin{enumerate}
\item Each $c \in \C$ is a representative of an isotopy class of an essential
reducing curve for some element of $G$ and each such isotopy class is represented
by some element of $\C.$
\item Elements of $\C$ are pairwise disjoint.
\item The set $\C$ is invariant up to isotopy in $M \setminus X$ under $G$.
\item If $\C \ne \emptyset$ each component of the complement of $\C$ in $M \setminus X$
has negative Euler characteristic.
\item  $\C$ is finite.
\end{enumerate}
\end{lemma}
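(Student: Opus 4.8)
The plan is to build $\C$ from geodesic representatives of all the essential reducing curves produced by Theorem~\ref{Thurston}, to use Lemma~\ref{+entropy} to force disjointness, and then to read off the remaining properties from standard surface topology. If no element of $G$ has an essential reducing curve, I set $\C = \emptyset$ and all five conclusions hold vacuously; so assume otherwise. Since $X \ne \emptyset$, a surface $M \setminus X$ that carries an essential non-peripheral simple closed curve has negative Euler characteristic, so $M \setminus X$ is hyperbolic and I fix a complete hyperbolic metric on it. Let $\cR$ be the collection of all isotopy classes of essential reducing curves of all elements $g \in G$ (in the sense of the remark following Theorem~\ref{Thurston}), and let $\C$ consist of the unique geodesic representative of each class in $\cR$. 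Property (1) holds by construction, and property (4) will follow once disjointness is in hand.

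For property (2) I would show that any two members of $\C$ have zero geometric intersection number. Two essential reducing curves coming from the canonical family $\U(g)$ of a single element $g$ are cores of pairwise disjoint annuli of Theorem~\ref{Thurston}, hence already disjoint. For a curve $c$ arising from $g_1 \in G$ and a curve $c'$ arising from $g_2 \in G$, the subgroup $\langle g_1, g_2\rangle \le G$ again consists of zero-entropy elements, so by the contrapositive of Lemma~\ref{+entropy} no annulus of $\U(g_1)$ meets an annulus of $\U(g_2)$ essentially; in particular $c$ and $c'$ do not intersect essentially. Since distinct simple closed geodesics in a hyperbolic surface realize the geometric intersection number of their isotopy classes, curves of zero geometric intersection number have disjoint geodesic representatives. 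Thus the elements of $\C$ are pairwise disjoint, and being distinct geodesics they are pairwise non-isotopic.

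Property (3) is the conjugation-invariance of the canonical reducing system: for $h \in G$ and $c$ an essential reducing curve of $g \in G$, the curve $h(c)$ is an essential reducing curve of $hgh^{-1} \in G$, by the uniqueness statement in the remark after Theorem~\ref{Thurston} (cf.\ \cite{BLM}); hence $\cR$ is $G$-invariant as a set of isotopy classes and $\C$ is invariant up to isotopy. For property (5), $\C$ is a collection of pairwise disjoint, pairwise non-isotopic, essential non-peripheral simple closed curves on a finite-type surface, and any such collection has cardinality bounded by the complexity of a pants decomposition of $M \setminus X$, hence is finite. Finally, for property (4) I would rule out a complementary component $Y$ with $\chi(Y) \ge 0$: since $M$ is connected and $\C \ne \emptyset$, each such $Y$ is adjacent to at least one curve of $\C$, so $Y$ would have to be a disk, an annulus, or a once-punctured disk, each of which forces a curve of $\C$ to bound a disk, to be isotopic to another curve of $\C$ or to a component of $\partial M$, or to be peripheral about a point of $X$; all of these are excluded because the curves of $\C$ are essential, non-peripheral, and pairwise non-isotopic.

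The main obstacle is property (2): it is the only step that uses the dynamical hypothesis, and it is precisely where Lemma~\ref{+entropy} converts ``$G$ has zero entropy'' into the geometric statement that the reducing curves of all elements of $G$ can be realized simultaneously and disjointly. The one point requiring care is the bookkeeping in property (4), where I must know that essential reducing curves are genuinely essential \emph{and} non-peripheral; once that is fixed, the Euler-characteristic case analysis is routine.
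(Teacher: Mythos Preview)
Your proposal is correct and follows essentially the same approach as the paper: collect all isotopy classes of essential reducing curves of elements of $G$, use Lemma~\ref{+entropy} to force pairwise zero geometric intersection, invoke conjugation for $G$-invariance, and then deduce (4) and (5) from elementary surface topology. The only cosmetic differences are that you realize the curves as geodesics (the paper simply asserts disjoint representatives exist), you bound $\card(\C)$ by the pants complexity directly while the paper first proves (4) and then bounds the number of negative-Euler-characteristic complementary pieces, and your case analysis for (4) is slightly more explicit about the once-punctured-disk case.
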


\begin{proof}
Let $\bar \C = \{ [c_j]\}$ be the set of 
isotopy classes of all essential reducing curves for all elements of $G$
acting on $M \setminus X$.  We may choose the representative  $c_i$  of $[c_i]$ so
that $c_i \cap c_j = \emptyset$ if  $i \ne j$ since otherwise
there would be an element with positive entropy by Lemma~\ref{+entropy}. 
Let $\C = \{c_j\}$ then (1) and (2) are satisfied.

The set $\C$ is invariant up to isotopy under $G$ since if $c_i$ is a reducing curve
for $f$ then $g(c_i)$ is a reducing curve for $g f g^{-1}.$  This proves
(3). 

If $\C \ne \emptyset$ and $Y$ is a component of the complement of $\C$ in 
$M \setminus X$ with non-negative Euler characteristic then $Y$ is homeomorphic
to the annulus or the disk.  If it were the annulus the two boundary components
of $Y$ would be isotopic in $M \setminus X$ and hence represent the same
element of $\C.$ If it were a disk its boundary would be a null-homotopic 
reducing curve for some element of $G$ which does not occur by definition.
Hence we have shown (4).

The set $\C$ must be finite since if the complement of $\C$ has more than $n$ 
components each with negative Euler characteristic,  then the Euler
characteristic of $M \setminus X$ is at most $-n.$ But this
Euler characteristic is $\chi(M) - \card(X)$ and hence one calculates that
$n \le \card(X) -\chi(M) = \card(X) + 2g -2$ where $g$ is the genus of $M$.
This proves (5).
\end{proof}

The following result is implicit in Theorem 4.6 of \cite{McP}. Since
it largely follows from results of Thurston and Kerckhoff, we give the proof
here.

\begin{lemma}\label{canonical}
Suppose $M$ is a compact oriented surface and $X \subset M$ is finite.
Let  $G$ be a finitely generated
subgroup of $\B(M, X)$ each element of which has a representative of zero
entropy. There exists a finite set $\U$ of pairwise disjoint closed annuli
in $M \setminus X$ and an isomorphism $\phi: G \to \G$ where $\G$
is a subgroup of $\Homeo_0(M)$ 
such that
\begin{enumerate}

\item The elements of $\U$ are permuted by $\G$ and the restriction of
each $g \in \G$ to an $A_i \in \U$ is a generalized Dehn twist.

 \item For each $A_i \in \U$ there is some $g \in \G$ such that $g$ leaves $A_i$ invariant and $g | _{A_i}$ is an essential generalized Dehn twist.  Conversely for any 
non-trivial element of $G$ each of the annuli $A_j(g)$ guaranteed by 
Theorem~\ref{Thurston} is isotopic to one of the $A_i \in \U.$

 \item The restriction of $\G$ to $U = \bigcup_i A_i$ is a finitely generated
subgroup of $\Homeo_0(U).$  The stabilizer in $\G$ of any $A_i \in \U$ is
infinite cyclic. 
 \item The restriction of $\G$ to $M \setminus U$ is 
 a finite group. 
\end{enumerate}
\end{lemma}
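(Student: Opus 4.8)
The plan is to construct $\G$ by separating the twisting data on a family of annuli from the finite-order data on the complement, and then gluing the two via Lemma~\ref{Dehn twist}. First I would invoke Lemma~\ref{C-properties} to produce the finite, pairwise disjoint family $\C = \{c_i\}$ of essential reducing curves, invariant up to isotopy under $G$, and let $\U = \{A_i\}$ be pairwise disjoint closed collar annuli, one about each $c_i$. Cutting $M \setminus X$ along $\C$ (equivalently deleting $\Int(U)$, where $U = \bigcup_i A_i$) produces a surface $N$, each component of which has negative Euler characteristic by Lemma~\ref{C-properties}(4). The boundary collars of $\partial M$ required by Theorem~\ref{Thurston} carry only rotation (non-essential) behavior and need not be listed in $\U$; the essential annuli of any element are, by Lemma~\ref{C-properties}(1), isotopic to some $A_i$, which will give the ``converse'' half of conclusion~(2).

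The first substantive step is to check that the mapping class $\bar g$ induced by $g \in G$ on $N$ (rel $\partial N$ and rel $X$) has finite order. Indeed $g$ permutes the finitely many components of $N$, so a power fixes each component and, by Theorem~\ref{Thurston}(4), restricts there to a finite-order map; hence $\bar g$ is torsion and the induced group $\bar G$ is a finitely generated torsion subgroup of the mapping class group of $N$. I would then apply Lemma~\ref{Kerckhoff} (whose Lefschetz--Burnside argument extends to the possibly disconnected $N$, since a permutation of the components acting trivially on $H_1$ must already fix each component) to obtain an isomorphism $\psi$ from $\bar G$ onto a finite group $\psi(\bar G)$ of isometries of a hyperbolic structure on $N$ with geodesic boundary. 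This yields conclusion~(4), and, crucially, each finite-order orientation-preserving isometry acts on every boundary circle of $N$ as a rational rotation.

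Next I would extend these isometries across $U$. For $g \in G$, choose a representative of $[g]$ agreeing with $\psi(\bar g)$ on $N$; its restriction $A_i \to A_{\sigma(i)}$ is then a rational rotation on each boundary circle, so Lemma~\ref{Dehn twist} supplies a unique generalized Dehn twist isotopic to it rel $\partial A_i$. Replacing the representative on each annulus by this twist defines $\phi(g)$, a homeomorphism in the class $[g]$ that equals $\psi(\bar g)$ on $N$ and is a generalized Dehn twist on each $A_i$; independence of the choice of representative follows from the uniqueness in Lemma~\ref{Dehn twist}. The step I expect to be the main obstacle is verifying that $\phi$ is a homomorphism: on $N$ this is immediate because $\psi$ is, while on $U$ one composes generalized Dehn twists (permuting the annuli by $\sigma$), and the compositionality $\alpha(f\circ g)=\alpha(f)\circ\alpha(g)$ together with the uniqueness of Lemma~\ref{Dehn twist} force $\phi(g)\phi(g')|_U$ to be the unique generalized Dehn twist in the class $[gg']$, hence equal to $\phi(gg')|_U$. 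Since $\phi(g)$ represents $[g]$, the map $\phi$ is injective, and $\G = \phi(G)$ permutes $\U$ with Dehn-twist restrictions, giving~(1); conclusion~(2) follows since each $c_i$ is an essential reducing curve for some $g \in G$, a power of which leaves $A_i$ invariant and twists essentially there.

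Finally, for~(3): the restriction $\G \to \Homeo_0(U)$ is a homomorphic image of the finitely generated group $G$, hence finitely generated. For the stabilizer of a fixed $A_i$, the twist coefficient $a$ of the generalized Dehn twist on $A_i$ defines a homomorphism $\Stab_{\G}(A_i) \to \Q$ whose image is a nontrivial (by~(2)) finitely generated subgroup of $\Q$, hence infinite cyclic. The remaining and most delicate point is that this map has no extra kernel, i.e.\ that the finite-order rotation part contributed by $\psi$ does not produce an additional torsion factor; I would control this by observing that an element acting on $A_i$ with zero twist coefficient restricts there and on $N$ to finite-order behavior and is therefore realized by a finite-order isometry, to which the Lefschetz argument of Lemma~\ref{Kerckhoff} applies. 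This torsion control is exactly the subtle case when $\partial M = \emptyset$, and is where I would expect to spend the most care.
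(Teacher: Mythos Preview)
Your approach is essentially the paper's: obtain $\C$ from Lemma~\ref{C-properties}, realize the induced action on the complement of $U$ as a finite group of hyperbolic isometries via Lemma~\ref{Kerckhoff}, and then use the uniqueness and compositionality of Lemma~\ref{Dehn twist} to extend across the annuli and verify the group law. The only noteworthy differences are that the paper also places collar annuli about the components of $\partial M$ into $\U$, and that the paper simply asserts properties (1)--(4) ``by construction'' rather than giving your more careful discussion of the homomorphism check and of the stabilizer claim in~(3).
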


\begin{proof}
Let $\C$ be the set of simple closed curves guaranteed by Lemma~\ref{C-properties}

We consider the set $\U = \{A_i\}$ of pairwise disjoint annular neighborhoods of representatives of each $c_i$ and of each component of $\partial M$. We choose these so that
$A_i \cap X =\emptyset.$  By Theorem~\ref{Thurston} we may choose a representative
of each element of $G$ which permutes the elements of $\U$ and which is isotopic to
a generalized Dehn twist on each $A_i$.  Note that there may be some $A_i$ and
some $g$ such that the restriction to $A_i$ is inessential.

Let $U = \bigcup_i A_i$.  
By Theorem~\ref{Thurston}  elements of $G$ may be represented
by a finite set $\G_0$ of homeomorphisms of $M$ whose 
each of whose restrictions to $M \setminus int(U)$
is a finite order isometry in a hyperbolic metric. 

By Lemma~\ref{Kerckhoff} we may choose the representatives $\G_0$ so
their restrictions to $M \setminus int(U)$ form a finite group of
isometries of a hyperbolic metric.  We may then choose $\T^1$
coordinates on $\bigcup_i \partial A_i$ so that the restriction of any
element of $\G_0$ to $\partial A_i$ is a rotation.

For each $i$ we extend these coordinates on $\partial A_i$ to $\T^1
\times I$ coordinates on $A_i.$ For each $g_0 \in \G_0$ we observe
that $g_0|_{A_i}$ is isotopic rel $\partial A_i$ to a generalized Dehn
twist from $A_i$ to $g_0(A_i)$.  Applying Lemma~\ref{Dehn twist} we
alter each $g_0$ by an isotopy supported on $int(U)$ to obtain a new
homeomorphism $g: M \to M$ which is a generalized Dehn twist on each
$A_i.$ We denote by $\G$ the set of elements obtained in this way from
all the elements of $\G_0$.  It follows from the uniqueness and
composition properties in Lemma~\ref{Dehn twist} that elements of $\G$
restricted to $U$ form a group.  Since the restrictions of $\G$ and
$\G_0$ coincide on $M \setminus int(U)$ we conclude that $\G$ is a
subgroup of $\Homeo_0(M).$ Properties (1)-(4) are satisfied by
construction.
\end{proof}

\begin{cor}\label{finitely generated}
Let $M, X,$ and $G$ be as in Lemma~\ref{canonical} and suppose
$H$ is a normal subgroup of $G$.  Then $H$ is finitely generated.
\end{cor}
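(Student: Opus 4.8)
The plan is to read off the group-theoretic structure of $G$ from Lemma~\ref{canonical} and then invoke standard facts about group extensions. Using the isomorphism $\phi\colon G \to \G$ I would replace $G$ by the homeomorphism group $\G \le \Homeo_0(M)$ and $H$ by $\phi(H)$; thus it suffices to prove that subgroups of $\G$ are finitely generated. Writing $U = \bigcup_i A_i$, consider the restriction homomorphism
\[
\rho\colon \G \to \Homeo(M\setminus U), \qquad \rho(g) = g|_{M\setminus U}.
\]
By property (4) of Lemma~\ref{canonical} the image $\rho(\G)$ is finite. Put $K = \ker\rho$, the subgroup of elements restricting to the identity on $M\setminus U$.

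The first substantive step is to show that $K$ is finitely generated. If $g\in K$ then $g$ fixes $\partial A_i$ pointwise for each $i$; since $\G$ permutes the pairwise disjoint closed annuli of $\U$, the annulus $g(A_i)$ has the same boundary as $A_i$, and disjointness of the closed annuli forces $g(A_i)=A_i$, so $g\in\Stab(A_i)$. Hence $K\subseteq\Stab(A_i)$, and by property (3) of Lemma~\ref{canonical} this stabilizer is infinite cyclic, so $K$ is cyclic and in particular finitely generated. Equivalently, by property (1) each $g\in K$ is an integer generalized Dehn twist on every $A_i$; using the additive composition law of Lemma~\ref{Dehn twist} one obtains an injection $K\hookrightarrow\Z^{\card(\U)}$, again exhibiting $K$ as finitely generated free abelian.

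It remains to assemble these facts. For a subgroup $H\le\G$ the restriction of $\rho$ gives a short exact sequence
\[
1 \longrightarrow H\cap K \longrightarrow H \longrightarrow \rho(H) \longrightarrow 1.
\]
Here $H\cap K$ is a subgroup of the finitely generated cyclic group $K$, hence finitely generated, while $\rho(H)\le\rho(\G)$ is finite. An extension of a finitely generated group by a finite group is finitely generated---adjoin to a finite generating set of $H\cap K$ one $\rho$-preimage for each of the finitely many elements of $\rho(H)$---so $H$ is finitely generated, proving the corollary. I note that normality of $H$ is never used; this reflects the fact that $\G$ is abelian-by-finite and therefore has every subgroup finitely generated. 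The only real obstacle is the middle step, namely pinning down $K$ as a finitely generated abelian group. Once the disjointness of the annuli is used to confine $K$ inside a single infinite cyclic stabilizer via property (3), the remainder is routine extension arithmetic.
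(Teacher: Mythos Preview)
Your argument is essentially the paper's: set $K=\ker(\G\to\Homeo(M\setminus U))$, show $K$ is finitely generated abelian, and then use that $H/(H\cap K)$ embeds in the finite group $\G/K$. One caution: your first route—$K\subseteq\Stab(A_i)$, which is infinite cyclic by property (3)—yields the conclusion that $K$ itself is cyclic, and this is false in general (take $\G$ generated by Dehn twists about two disjoint non-isotopic curves, so $K=\G\cong\Z^2$); property (3) as literally phrased is at best imprecise, and the paper does not invoke it here. Your second route, the injection $K\hookrightarrow\Z^{\card(\U)}$ via the twist parameter on each annulus, is the correct one and is exactly what the paper's proof does; your observation that normality of $H$ is never used is also correct.
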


\begin{proof}
Let $\U, U$ and $\G$ be as in Lemma~\ref{canonical},
let $\H$ be the subgroup of $\G$ whose elements are representatives
of elements of $H$ and let $\K$ be the 
(finite index) normal subgroup of $\G$ which fixes each component of 
$M \setminus U$ pointwise.
Then $\H \cap \K$ fixes each $U_i \in \U$ and
restricts to Dehn twists on that $U_i.$
It follows that $\H \cap \K$ is
a finitely generated free abelian group.  Since 
$\H/\H \cap \K \cong \H\K/\K$ and $\H\K/\K$ is subgroup of
$\G/\K$ which is finite we conclude $\H$ is finitely generated.
\end{proof}

\section{The case of $M$ with genus $0$}

We are now prepared to present the proof of Theorem~\ref{solvable-0}.
\medskip

{\bf Theorem~\ref{solvable-0}} {\em 
  Suppose $M$ is an oriented compact connected surface of genus $0$
  and let $G$ be a finitely generated infinite subgroup of $\B(M, X)$
  with zero entropy, where $X$ is a non-empty finite $G$-invariant
  set. Then if $\partial M \ne \emptyset,$ or there is a point $x \in X$
  fixed by all elements of $G$, then $G$ is solvable. If $M = S^2$
  then there is a finite index normal subgroup $G_0$ of $G$ which is
  solvable and such that $G/G_0$ acts effectively on $S^2.$ Moreover
  when $G$ is solvable the kernel $K$ of the homomorphism
  $\pi: G \to \cS_{G}(X)$ is free abelian.
 }

\begin{proof}
We may assume that the Euler characteristic of $M \setminus X$ is negative
since otherwise $X$ contains at most two points and $\B(M, X)$ is
trivial or $\Z/2\Z$. Let $\U$ be the set of annuli guaranteed by Lemma~\ref{canonical}. The
set $\U$ must be non-empty since otherwise $G$ would be finite.


Recall that $U$ denotes the union of $A_i \in \U$ and let $\E$ 
denote the components of the complement of $U$.
The elements of $\E$ are permuted by $\G$.  We will prove the result
by induction on $\card(\E).$ If $\card(\E) = 1$ then 
each $A_i \in \U$ is a neighborhood of a boundary component of 
$M$ and hence invariant under $\G$.  It follows that $[\G,\G]$ acts
trivially on $U$.  By Lemma~\ref{canonical} $[\G,\G]$ acts as a
finite abelian group on $M \setminus int(U)$ and it must then be
trivial since it pointwise fixes the (non-empty) boundary. This provides
the base case of our induction.

We form a graph $\Gamma$ with one vertex for each element of $\E$ and
vertices joined by an edge if the corresponding elements of $\E$
intersect a common annulus $A_i \in \U$.  The graph $\Gamma$ is a tree
since a cycle in it would give rise to a curve in $M$ which
intersected an essential curve in some $A_i$ in a single point.

Proposition~\ref{tree action} implies there is an element $E_0 \in \E$
which is preserved by the action of $\G$. Note this is trivial if
there is $x \in X$ fixed by all elements of $\G$, because $E_0$ can be
taken as the element of $\E$ containing the global fixed point $x$.

Let $\G_0$ be the finite index normal subgroup of
$\G$ which preserves each component of $\partial E_0$ and
let $\G_0^{(n)} = [\G_0^{(n-1)},\G_0^{(n-1)}]$ for $n \ge 1.$

By Lemma~\ref{canonical} (4), we know that the action of $\G$
restricted to $E_0$ is finite.  This finite group (which we may assume
are isometries) is isomorphic to $\G/\G_0$ because the action of $G_0$
on $E_0$ is trivial since $G_0$ preserves components of $\partial E_0$
and any finite order element of $\Homeo_0(E_0)$ preserving $\partial
E_0$ must be the identity.  Note that this implies the group $\G/\G_0$
acts on $E_0$ and hence there is an effective action of the finite
group $G/G_0 \cong \G/\G_0$ on $S^2$. Also if there is a global fixed
point $x \in X \cap E_0$ this finite group $\G/\G_0$ is abelian.

Let $M_0$ be the compact surface $M \setminus int(E_0).$ Let $\bar A$ be
the union of the $A_i$'s which intersect the boundary of $E_0$.  Then
the restriction of $\G_0$ to the annuli $\bar A$ leaves each annulus
invariant and is abelian.  Hence the action of $\G_0^{(1)} = [\G_0, \G_0]$  
restricted to  $\bar \A$ is
trivial. Thus $\G_1$ preserves each of the components of $M_0$.  The
number of components of the complement of reducing curves for each of
these actions is smaller than the number of components 
for the action of $\G_0$.  Each
of these actions is an action on the disk $D^2$ whose restriction to
the boundary $\partial D^2$ has finite order.  If we cone off the
action on $\partial D^2$ we obtain an action of $\G_0^{(1)}$ with a global
fixed point $x_0$ the cone point. We now augment $X$ by adding $x_0$
and note that by the induction hypothesis the restriction of $\G_0^{(1)}$ to
a component of $M_0$ is solvable.  It follows that $\G_0^{(1)}$ is solvable
and consequently $\G_0$ is solvable. In the case that there is an $x
\in X$ fixed by all elements of $\G$ the group $\G/\G_0$ is abelian
and hence $[\G,\G] \subset \G_0$ and $\G$ is solvable.  Since $G \cong
\G$ we have completed the case that $M = S^2.$

We are left with the case that $\partial M \ne \emptyset$. In this
case we cone the action of $G$ on each boundary component of $M.$
Since $\G$ pointwise fixed $\partial M$ all the disks added in the
coning are fixed pointwise by $G$.  We may therefore augment $X$ by
adding two points in each of the added disks to $X$ to form $X'.$ This
guarantees that the natural map $B(M, X) \to B(S^2, X')$ is injective
on $G$ so applying the result for the sphere we obtain the result for
a subgroup $G$ of $B(M, X).$

To obtain the fact that $K$ is free abelian when $G$ is solvable we note
that elements of $K$ fix $X$ pointwise.  Each
element of $K$ permutes elements of $\U$, but each such element bounds
a disk containing a point of $X$. It follows that each element of $K$
preserves each element of $\U$ and is the identity on the complement
of their union $U$. Since the restriction on each element of $K$ to
an element of $\U$ is a Dehn twist, it follows that $K$ is free abelian.
\end{proof}

{\bf Corollary~\ref{cor-0}}
{\em Suppose $G$ satisfies the hypothesis of Theorem~\ref{solvable-0}.
If $G$ is solvable then $G/K \cong 
\cS_{G}(X).$  In particular the derived length satisfies 
\[
\dlen(G) -1 \le \dlen(\cS_{G}(X)) \le \dlen(G).
\]
}

\begin{proof}
We note that 
$\cS_G(X)$ is a homomorphic image of $G$ so $\dlen(\cS_G(X)) \le
\dlen(G).$  
If $n = \dlen(\cS_{G}(X))$ it is clear that $G^{(n)} \subset K$.
Hence since $K$ is abelian $G^{(n+1)}$ is trivial, so 
$\dlen(G) \le n +1$.  Thus
\[
\dlen(G) -1 \le \dlen(\cS_{G}(X)) \le \dlen(G).
\]
\end{proof}

\section{The case of  positive genus.}

Let $\U$ and $\G$ be as in the conclusion of Lemma~\ref{canonical}.
Recall $U = \bigcup_i A_i$ and let $\E$ be the components of $M
\setminus int(U).$ We define $\U_e$ to be the subset of $\U$
consisting of those $A_i \in \U$ which are essential in $M$ (not just in 
$M \setminus X$) and we let $\U_c$ be $\U \setminus \U_e$, i.e., those
elements of $\U$ which are inessential in $M$.  The core of an element
of $\U_c$ bounds a disk in $M$ containing at least two points of $X$.

\begin{lemma}\label{preserve U}
Suppose $M$ has genus $g > 1$, and  $\partial M = \emptyset$ 
Let  $G$ be a finitely generated
subgroup of $\B(M, X)$ each of whose elements have zero entropy. Then
\begin{enumerate}

\item If $\U_c \ne \emptyset$  and $\U_e = \emptyset$ then there is a normal 
subgroup of index $\le 84(g-1)$ in $\G$ 
which leaves invariant an element $A$ of $\U$.
\item If $\U_e \ne \emptyset$ then there is a subgroup of
$\G$ which leaves invariant an element $A$ of $\U$ and whose index
in $\G$ is $\le 84(g-1)(2g-2)!$.

\item If $\U_c = \U_e = \emptyset$ then $\G$ is finite an has 
order at most $84(g-1)$.

\end{enumerate}
\end{lemma}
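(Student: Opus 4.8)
The plan is to treat the three cases separately, in each case using that $\G$ permutes $\U$ and preserves the subfamilies $\U_e$ and $\U_c$ (a homeomorphism of $M$ preserves essentiality of a curve \emph{in} $M$), together with the fact from Lemma~\ref{canonical}(4) that $M\setminus U$ is $\G$-invariant and $\G|_{M\setminus U}$ is finite. In every case the factor $84(g-1)$ will come from Hurwitz's theorem: a finite group acting on a closed surface of genus $g\ge 2$ is, by Nielsen realization (Kerckhoff's theorem, as in Lemma~\ref{Kerckhoff}), conjugate to a group of isometries of a hyperbolic metric, and hence has order at most $84(g-1)$. Case (3) is then immediate: if $\U=\emptyset$ then $M\setminus U=M$, so $\G=\G|_M$ is finite by Lemma~\ref{canonical}(4), and Hurwitz gives $|\G|\le 84(g-1)$. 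Case (2) I would dispatch by an orbit--stabilizer count: the cores of $\U_e$ are pairwise disjoint, essential, and pairwise non-isotopic simple closed curves in the closed surface $M$, so $|\U_e|\le 3g-3$; since $\G$ permutes $\U_e$, for any $A\in\U_e$ the subgroup $\Stab_\G(A)$ has index at most $3g-3\le 84(g-1)(2g-2)!$ and leaves $A\in\U$ invariant. (This in fact yields the far stronger bound $3g-3$; cutting $M$ along the essential cores into the at most $2g-2$ complementary pieces of negative Euler characteristic that $\G$ permutes is presumably the source of the factorial in the stated bound.)

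The real content is case (1), where $\U_c\ne\emptyset$ and $\U_e=\emptyset$. Now every core of $\U$ is inessential in $M$, hence separating and bounding a disk, so cutting $M$ along these cores leaves exactly one complementary component $E_0$ of positive genus --- necessarily genus $g$ --- all other components being planar. Being the unique positive-genus component, $E_0$ is $\G$-invariant, and $\G|_{E_0}$ is finite by Lemma~\ref{canonical}(4). I would cap each boundary circle of $E_0$ with a once-marked disk to form a closed genus-$g$ surface $\hat M$ with marked points, extend the finite $\G$-action over the caps, and pass to isotopy classes to obtain a homomorphism $\G\to\hat\G$ onto a finite subgroup $\hat\G$ of the mapping class group of $\hat M$; Hurwitz then gives $|\hat\G|\le 84(g-1)$. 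Let $N$ be the kernel of this homomorphism. Then $N$ is normal of index $|\hat\G|\le 84(g-1)$, and every element of $N$ induces the trivial mapping class on $\hat M$, hence fixes each marked point, hence preserves the outermost disk bounded by each core of $\U_c$ and so preserves the corresponding annulus $A\in\U_c$. Since $\U_c\ne\emptyset$ such an $A$ exists, so $N$ is the required normal subgroup.

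The step I expect to demand the most care is the final one in case (1): verifying that the capping can be carried out $\G$-equivariantly so that $\G\to\hat\G$ is a genuine homomorphism into the mapping class group of the \emph{closed} surface, and, crucially, that membership in the kernel $N$ forces invariance of an annulus of $\U$ rather than merely of $E_0$. The point that makes this work is that the boundary circles of $E_0$ are in natural bijection with the outermost cores of $\U_c$, so the permutation of marked points recorded by $\hat\G$ is exactly the permutation of those outermost annuli; triviality of the induced mapping class then pins each one down. Note also that throughout $\chi(M\setminus X)<0$ automatically, since $g\ge 2$, so Nielsen realization and Hurwitz are available without any preliminary reduction.
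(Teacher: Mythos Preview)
Your arguments for cases (1) and (3) are correct and essentially match the paper's. In case (1) the paper first passes to the subfamily $\U_m\subset\U_c$ of \emph{maximal} annuli (those whose bounding disk is not contained in the disk of another element of $\U_c$) before taking complements, but the unique positive-genus complementary piece it obtains is your $E_0$, and the capping and Hurwitz bound are the same.

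Case (2), however, has a genuine gap. Your claim that the cores of $\U_e$ are pairwise non-isotopic \emph{in $M$}, and hence that $|\U_e|\le 3g-3$, is false. The cores are pairwise non-isotopic in $M\setminus X$ (this is how $\C$ and hence $\U$ are built in Lemma~\ref{C-properties}), but two of them can certainly become isotopic in $M$ once the punctures are filled in: if $A,A'\in\U_e$ have cores isotopic in $M$, they cobound an annulus in $M$ which must contain points of $X$. Concretely, on a genus-$2$ surface take $n$ parallel copies of a non-separating curve and place one point of $X$ in each of the $n-1$ annuli between consecutive copies; the Dehn twists about the $n$ copies generate a zero-entropy subgroup with $|\U_e|=n$, which exceeds any bound depending only on $g$. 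Your orbit--stabilizer step therefore yields only $[\G:\Stab_\G(A)]\le|\U_e|$, which is not bounded in terms of $g$ alone.

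The approach can be repaired. The cores of $\U_e$ fall into at most $3g-3$ isotopy classes in $M$, and within a single $M$-isotopy class the parallel copies are linearly ordered: the unique non-annular region of their complement in $M$ picks out the two extremal copies, and adjacency is preserved by any homeomorphism. Hence the stabilizer of the class in $\G$ acts on the parallel copies through a group of order at most $2$, and the $\G$-orbit of any $A\in\U_e$ has size at most $6(g-1)$, well inside the stated bound. The paper takes the route you sketch parenthetically: it cuts $M$ along the cores of $\U_e$, notes there are at most $2g-2$ non-annular complementary pieces (permuted by $\G$), passes to the subgroup $\G'$ of index at most $(2g-2)!$ fixing each of them, cones off the boundary of one such piece $E$, and then applies part (1) to $\G'$ acting on $\hat E$ to obtain a further subgroup of index at most $84(g-1)$ fixing an element of $\U$. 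Your corrected orbit argument is shorter and gives a sharper constant; the paper's reduction to part (1) is more uniform and avoids the linear-order analysis.
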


\begin{proof}

If $\U_c \ne \emptyset$ and $\U_e = \emptyset$ 
then we let $\U_m$ be the (non-empty) subset of maximal
elements of $\U_c$ under inclusion in disks.  
More precisely, $A \in \U_m$ if $A \in \U$
and one component of the complement of  $A$ is a 
disk in $M$ and $A$ is not a subset of an open
disk which is a component of the complement of some other $A' \in \U$.
There is only one component $C$ of the complement of the union of all elements
of $\U_m$ which has positive genus (necessarily of genus $g$) since there are no
elements of $\U_m$ which are essential in $M$.  The group $\G$ must preserve
the surface $C$ and its action on $C$ must be irreducible since $\U_e = \emptyset$.
Hence the action of $\G$ on $C$ is finite. Since $C$ has genus $g$
the order of the restriction of $\G$ to $C$ is
at most $84(g-1)$ (see Chapter 7 of \cite{FM}). It follows that the
normal subgroup of $\G$ which leaves invariant each of the elements of $\U_m$ which
intersects a boundary component of $C$ has index at most $84(g-1)$. This completes (1).

If $\U_e \ne \emptyset$ then
$\G$ acts in a way that it permutes the elements of $\U_e$ and 
permutes the components of the complement of their union.  
A component of the complement of $\U_e$ may be an annulus containing points
of $X$, but there must be at least one non-annular component.  There are
at most $2g-2 = |\chi(M)|$ non-annular components (see \cite{H}) and they
are permuted by $\G$.
Hence there is a subgroup $\G'$ of $\G$ which preserves each of these components
and which has index at most $(2g-2)!$ in $\G$. Any such non-annular component $E$
has genus at most $g$. The action of $\G'$ on $E$ can be extended to
$\hat E$ which we define to be $E$ with any boundary components coned off.
Since there are no elements of $\U_e$ in $E$, we may apply 
part (1) of this lemma to the group $\G'$ acting on $\hat E$ we conclude
there is a normal subgroup $\G''$ of $\G'$ of index 
at most $84(g-1)$ fixing at least one element of $\U$.  This completes
(2).

Finally if $\U_e = \U_c = \emptyset$ then $\U = \emptyset$ so $\G$ is finite 
by Lemma~\ref{canonical} and has order at most $84(g-1)$
(see Chapter 7 of \cite{FM}).

\end{proof}

\begin{lemma}\label{induction}
Suppose $M$ is an oriented compact connected surface of genus $g \ge
1$ and let $G$ be a finitely generated subgroup of
$\B(M, X)$ which contains no element with positive entropy and is
realized by $\G \cong G$ as in Lemma~\ref{canonical}.  If any 
of the following hold:
\begin{enumerate}
\item $\partial M \ne \emptyset$, or
\item $\Fix(\G) \cap X \ne \emptyset$, or
\item there exists an $A \in \U$ which is $\G$-invariant,
\end{enumerate}
then the group $G$ is solvable. 
\end{lemma}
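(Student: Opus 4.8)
The plan is to prove solvability by induction on $\card(\E)$, the number of components of the complement of the annuli $\U$ supplied by Lemma~\ref{canonical}, following the scheme used for Theorem~\ref{solvable-0}. The one structural novelty in positive genus is that the graph formed from $\E$ and $\U$ need no longer be a tree (a non-separating curve produces a cycle), so Proposition~\ref{tree action} can no longer be invoked to manufacture a $\G$-invariant complementary region for free; each of the three hypotheses is precisely what restores the invariant object on which to hang the recursion. I would first reduce (1) to (2): coning off each boundary component of $M$ by a disk and adding an interior point of each disk to $X$ yields a closed surface $\widehat M$ of the same genus with a finite set $X'$, and since $\G$ fixes $\partial M$ pointwise these cone points lie in $\Fix(\G)$ while the natural map $\B(M,X)\to\B(\widehat M,X')$ is injective on $\G$, exactly as in the proof of Theorem~\ref{solvable-0}. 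Because a subgroup of a solvable group is solvable, it then suffices to treat cases (2) and (3).

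For case (2) I would choose $x\in\Fix(\G)\cap X$ and let $E_0\in\E$ be the complementary region containing $x$, so that $E_0$ is $\G$-invariant. By Lemma~\ref{canonical}(4) the image $Q$ of the restriction homomorphism $\G\to\Homeo_0(E_0)$ is finite, and since every element of $Q$ fixes $x$ it is a finite group of rotations about $x$, hence cyclic and in particular abelian. Setting $\G_0=\ker(\G\to Q)$ gives a normal subgroup with $\G/\G_0\cong Q$ abelian. Since $\G_0$ is trivial on $E_0$ it preserves every annulus of $\U$ adjacent to $E_0$, on which it acts by generalized Dehn twists, so the commutator $\G_0^{(1)}=[\G_0,\G_0]$ acts trivially there and therefore preserves each component $E'$ of $M_0=M\setminus\Int(E_0)$. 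Capping the boundary circle of $E'$ coming from $E_0$ by a disk with a marked cone point produces a surface of strictly smaller $\card(\E)$ carrying an induced, finitely generated (Corollary~\ref{finitely generated}), zero-entropy action with a global fixed point; the induction hypothesis in case (2) then makes $\G_0^{(1)}|_{E'}$ solvable. As $\G_0^{(1)}$ is trivial on $E_0$ and on the adjacent annuli, it embeds in the product of its restrictions to the $E'$ and is hence solvable, so $\G_0$ is solvable, and since $\G/\G_0$ is abelian so is $\G\cong G$.

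For case (3) I would take $A\in\U$ to be $\G$-invariant with core $c$ and pass to the subgroup $\G'\le\G$ of index at most $2$ preserving the two sides of $A$. Cutting $M$ along $c$ and capping each new boundary circle by a disk with a marked cone point yields an induced zero-entropy action on a surface $M^c$ of strictly smaller $\card(\E)$ (the genus drops when $c$ is non-separating, and otherwise $M^c$ splits into two pieces of smaller complexity) possessing a global fixed point, since $\G'$ preserves $c$ and its sides. The induction hypothesis in case (2) then gives that the induced group $\bar\G'$ is solvable. The kernel of the cutting homomorphism $\G'\to\bar\G'$ is generated by the Dehn twist about $c$ and is central, hence infinite cyclic, so $\G'$ is cyclic-by-solvable and thus solvable; as $\G/\G'$ has order at most $2$ the group $\G\cong G$ is solvable.

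The base case is $\U=\emptyset$, where Lemma~\ref{canonical}(4) makes $\G$ finite and where a global fixed point (case (2)) or $\partial M$ fixed pointwise (case (1)) forces this finite group to be a cyclic rotation group or trivial, hence solvable, while case (3) is vacuous. The step I expect to be the main obstacle is keeping the finite quotients abelian throughout: a finite extension of a solvable group need not be solvable, so full solvability rather than mere virtual solvability survives the passage through $\G/\G_0$ and $\G/\G'$ only because a global fixed point forces the relevant finite groups to be cyclic. Guaranteeing that such a fixed point is genuinely produced at every stage — by coning in (1), by hypothesis in (2), and by capping the cut annulus in (3) — is what makes the recursion close up, and verifying that the induced actions remain zero-entropy and finitely generated after cutting and capping is the remaining point requiring care.
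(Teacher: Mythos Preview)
Your overall architecture is sound and essentially dual to the paper's: the paper reduces (2) to (1) by blowing up the fixed point and reduces (3) to (1) by deleting $\Int(A)$, then proves (1) by induction on the pair $(g,\card(\U))$; you reduce (1) to (2) by capping the boundary and reduce (3) to (2) by cutting along $c$ and capping, then prove (2) by induction on $\card(\E)$. Both routes work in principle, and your observation that a global fixed point forces the finite quotient at each stage to be cyclic (hence abelian) is exactly what is needed to get genuine, not merely virtual, solvability.

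There is, however, a real gap in your induction. In case~(3) you assert that cutting along $c$ and capping yields a surface with \emph{strictly smaller} $\card(\E)$, but this is false: removing the annulus $A$ from $\U$ and capping its two boundary circles leaves every old complementary region intact (each acquires an extra disk with a marked point), so $\card(\E)$ is unchanged. Your parenthetical remark that the genus drops or the surface disconnects is true but irrelevant to the stated induction variable. The fix is easy: either induct on $\card(\U)$ (which does drop by one in case~(3)), or---cleaner---first prove case~(2) in full generality by your induction on $\card(\E)$, which is legitimate there, and only afterwards reduce (3) to the already-established (2) rather than to an ``induction hypothesis.'' The paper avoids the issue by inducting on $(g,\card(\U))$ lexicographically.

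Two smaller points. First, in your reduction (1)$\to$(2) the map $\B(M,X)\to\B(\widehat M,X')$ is \emph{not} injective when you add only one marked point per cap: the Dehn twist about a boundary-parallel curve becomes trivial since it then bounds a once-punctured disk. The paper adds two points per cap for exactly this reason in the proof of Theorem~\ref{solvable-0}. Your argument survives anyway because the kernel is free abelian (generated by boundary twists), but the injectivity claim as written is incorrect. Second, when a capped piece $E'$ in case~(2) turns out to have genus~$0$ you leave the statement of Lemma~\ref{induction} and must invoke Theorem~\ref{solvable-0}; this genus-zero base case should be stated alongside the $\U=\emptyset$ base case.
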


\begin{proof}
We induct on the genus $g$ of $M$ and the cardinality of $\U$.
More precisely given $M$ and $X$ we will show the result holds if it holds
for surfaces of lower genus and for surfaces of the same genus with a 
lower cardinality of $\U$.  We first address the base cases 
of the induction. If the genus $g=0$ this is provided by Theorem~\ref{solvable-0}.
If $\U = \emptyset$ Lemma~\ref{canonical} implies that $\G$ is finite and
we consider the three possible hypotheses.  If $\partial M \ne \emptyset$
and $\G$ fixes it pointwise then $\G$ is trivial.  
If $x \in \Fix(\G) \cap X$ then $\G$ acts in an abelian fashion on a neighborhood
of $x$ and hence on all of $M$.  If there exists an $A \in \U$ which is $\G$-invariant,
then the restriction of $G^{(1)} = [G,G]$ to $A$ is trivial and hence
$G^{(1)}$ acts trivially on $M$. Hence in all cases $G$ is solvable. Thus the
result holds if the genus $g$ is zero or the cardinality of $\U$ is zero.

We proceed to the inductive step, first in the 
case that $\partial M \ne \emptyset$. Let $B$ be a 
component of $\partial M$. The boundary component $B$ may be a boundary component
of an annulus $A \in \U$ (which collars $B$). Let $E_1$
be the component of $\E = M \setminus U$ which intersects $B$ or an annulus
$A \in \U$ which contains $B$.  The group $\G^{(1)} = [\G, \G]$ acts as the
identity on $B$ and hence on one component of the boundary of $E_1$.  Since
every element of $\G^{(1)}$ acts as an irreducible map on $E_1$ and fixes one component
of its boundary pointwise, $\G^{(1)}$ must act trivially on $E_1.$ If we consider
the components of the complement of $E_1$ the group $\G^{(1)}$ preserves them and each
of them has either lower genus or contains only a proper subset of $\U_c$ or both.
The induction hypothesis then implies that $\G^{(1)}$ restricted to these components
is solvable. This completes the proof part (1), namely that $\G$ is solvable when 
$\partial M \ne \emptyset$.  

If there exists $x \in X$ fixed by each element
of $\G$ then we may blow it up to obtain a surface $M'$ with boundary on which
$\G$ acts.  Since $x$ lies in the interior of the complement of $U$ the action
of $\G$ on $\partial M'$ is abelian.  The previous case applied to $M'$
then shows that $\G^{(1)}$ is solvable so $\G$ is also.

We now consider the case that $\partial M = \emptyset$.
Let $\U$ and $\G$ be as in the conclusion of Lemma~\ref{canonical} and
let $\G_e$ and $\U_e$  be as in Lemma~\ref{preserve U}. 
By hypothesis there is an $A \in \U$ which
is $\G$ invariant. The group $\G^{(1)}$ acts trivially on $A$.
Let $M'' = M \setminus int(A)$ so each component of $M''$ is $\G^{(1)}$ invariant
and $\G^{(1)}$ fixes the components of $\partial M''$ pointwise. $\G^{(1)}$ acts
on each component of $\M''$ fixing the boundary pointwise and hence
this action is solvable by part (1) of this theorem.
\end{proof}

We can now complete the proofs of Theorem~\ref{solvable-1} 
and Theorem~\ref{solvable-T}.  Recall that in both of these results
$X$ is a non-empty finite set.
\medskip

\noindent
{\bf Theorem~\ref{solvable-1}}
{\em Suppose $M$ is an oriented compact connected surface of genus $g >
1$ and let $G$ be a finitely generated infinite subgroup of $\B(M, X)$ which
contains no element with positive entropy. 
\begin{enumerate}
\item If $\partial M \ne \emptyset$, or there exists $x \in X$ fixed by each
element of $G$, then $G$ is solvable.
\item Otherwise $G$ contains a solvable 
subgroup $G_0$ of index at most $84(g-1)(2g-2)!$ in $G$.
\end{enumerate}
}

\begin{proof}
If $\partial M \ne \emptyset$ then the result follows from Lemma~\ref{induction}.
If $\partial M = \emptyset$ and $\U_c \ne \emptyset$ then by part (1) of 
Lemma~\ref{preserve U} there is a subgroup of index 
$\le 84(g-1)$ in $\G$ which leaves invariant an element $A$ of $\U$. This
subgroup is solvable by Lemma~\ref{induction}. 

If $\partial M = \emptyset$ and $\U_c = \emptyset$ and 
$\U_e \ne \emptyset$ then by part (2) of 
Lemma~\ref{preserve U} there is a subgroup of index  
at most $84(g-1)(2g-2)!$ in $\G$ which leaves invariant an element $A$ of $\U$. This
subgroup is solvable by Lemma~\ref{induction}. 

Finally, if $\U_e = \U_c = \emptyset$ then part (3) of 
Lemma~\ref{preserve U} implies $\G$ is finite 
contradicting our hypothesis.
\end{proof}

Only the proof of Theorem \ref{solvable-T} remains.
\medskip

\noindent
{\bf Theorem \ref{solvable-T}}
{\em Suppose $M$ is an oriented compact connected surface of genus $g =1$ 
and let $G$ be a finitely generated subgroup of $\B(M, X)$ which
contains no element with positive entropy. Then $G$ is solvable.
}

\begin{proof}
If $\partial M \ne \emptyset$ the result follows from Lemma~\ref{induction},
so we may assume $\partial M = \emptyset$ and $M = \T^2$.
As before let $\U_e$ denote the subset of elements of $\U$ which are essential 
in $M$ and let $\G_e$ be the subgroup of $\G$ which preserves each element of $\U_e$.
Then all elements of $\U_e$ must be parallel in $M$ and
separated by points of $X$. We may choose
a simple closed curve $\gamma$ in $M \setminus X$ crossing each $U_i \in \U_e$ exactly
once.  A circular order on $\gamma$ induces a circular order on the
components of the complement of annuli in $\U_e$ 
or equivalently on the elements of $\U_e$.
This order must  must be preserved by elements of $\G$.  It follows that
the action of $\G$ on $\U_e$ factors through a finite cyclic group and that
$\G/\G_e$ is a finite cyclic group.   We conclude that
$\G^{(1)} = [\G,\G]$ preserves all elements of $\U_e$. We may therefore
apply Lemma~\ref{induction} to $\G^{(1)}$ to obtain the desired result.
\end{proof}

\subsection*{Acknowledgements}
The authors would like to thank Chris Leininger for reading an earlier draft of this paper and providing several useful comments and suggestions.

\end{document}